\documentclass[12pt,a4paper]{amsart}
\usepackage[top=1.5in, bottom=1.25in, left=1.25in, right=1.25in]{geometry}
\usepackage{amsmath}
\usepackage{amssymb}
\usepackage{amsfonts}
\usepackage{palatino, mathrsfs}
\usepackage{url}
\usepackage{xcolor}
\usepackage{fullpage, microtype, colonequals}

\newtheorem{theorem}{Theorem}[section]
\newtheorem{definition}{Definition}
\newtheorem{lemma}[theorem]{Lemma}
\newtheorem{proposition}[theorem]{Proposition}

\newtheorem*{conjecture*}{Conjecture}
\newtheorem*{theorem*}{Theorem}
\newtheorem*{remark*}{Remark}
\numberwithin{equation}{section}

\usepackage{mathtools}

\begin{document}

\begin{center}

\title[Ergodicity for 3D SNSEs with Markov switching]{Ergodicity for Three-Dimensional Stochastic Navier-Stokes Equations with Markov Switching}

\author{Po-Han Hsu$^\ast$}
\address{4415 French Hall-West, University of Cincinnati, Cincinnati, OH,  45221-0025, USA}
\email{hsupa@ucmail.uc.edu}

\author{Padmanabhan Sundar}
\address{316 Lockett Hall, Louisiana State University, Baton Rouge, LA, 70803-4918,  USA}
\email{psundar@lsu.edu}

\subjclass[2010]{35Q30, 37L40, 60J75} 
\keywords{Stationary measure, stochastic Navier-Stokes equation with Markov switching}
\date{\today}

\thanks{$\ast$The author  was partially supported by NSF grant DMS-1622026.}

\begin{abstract}
Asymptotic behavior of the three-dimensional stochastic Navier-Stokes equations with Markov switching in additive noises is studied for incompressible fluid flow in a bounded domain in the three-dimensional space. To study such a system, we introduce a family of regularized equations and investigate the asymptotic behavior of the regularized equations first. The existence an ergodic measure for the regularized system is established via the Krylov-Bogolyubov method. Then the existence of an stationary measure to the original system is obtained by extracting a limit from the ergodic measures of the family of the regularized system.
\end{abstract}
\maketitle
\end{center}

\section{Introduction}
Let $G$ be an open bounded domain in $\mathbb{R}^{3}$ with a smooth boundary. Let the three-dimensional vector-valued function ${\bf u}(x, t)$ and  the real-valued function $p(x, t)$ denote the velocity and pressure of the fluid at each $x\in G$ and time $t\in [0, T]$. The motion of viscous incompressible flow on $G$ with no slip at the boundary is described by the Navier-Stokes system:
\begin{align}
\partial_{t}{\bf u}-\nu\Delta{\bf u}+({\bf u}\cdot\nabla){\bf u}-\nabla p&={\bf f}(t)\quad&&\mbox{in}\quad G\times[0, T] \label{eq},
\\ \nabla\cdot {\bf u}&=0\quad&&\mbox{in}\quad G\times[0, T],\nonumber
\\ {\bf u}(x,t)&=0 \quad&&\mbox{on}\quad\partial G\times[0, T],\nonumber
\\ {\bf u}(x, 0)&={\bf u}_{0}(x)\quad&&\mbox{on}\quad G\times\{t=0\},\nonumber
\end{align}
where $\nu>0$ denotes the viscosity coefficient, and the function ${\bf f}(t)$ is an external body force.  Recalling the Helmholtz decomposition, which states that $L^{2}(G)$ can be written as a direct sum of solenoidal part and irrotational part, and applying the Leray projector to equation \eqref{eq}, one may write equation \eqref{eq} in the abstract evolution form on a suitable space as follows (see, e.g., \cite{Leray, Temam} for details):
\begin{align}\label{evolution B}
{\bf du}(t)+[\nu{\bf Au}(t)+{\bf B}({\bf u}(t))]dt={\bf f}(t)dt,
\end{align}
where ${\bf A}$ is the Stokes operator and ${\bf B}$ is the nonlinear inertial operator  introduced in Section \ref{pre}. 

A random body force, in the form of a additive noise driven by a Wiener process  $W(t)$, is added to the model (see, e.g., \cite{turbulence}) so that one obtains
\begin{align*}
\mathbf{du}(t)+[\nu\mathbf{Au}(t)+\mathbf{B}(\mathbf{u}(t))]dt=\mathbf{f}(t)dt+\sigma(t)dW(t).
\end{align*}
Moreover, if the noise is allowed to be ``discontinuous,'' then a term driven by a Poisson random measure $N_{1}(dz, ds)$ (which is independent of $W(t)$) is added so that the equation becomes
 \begin{align}\label{eq tur}
\mathbf{du}(t)+[\nu\mathbf{Au}(t)+\mathbf{B}(\mathbf{u}(t))]dt
=\mathbf{f}(t)dt+\sigma(t)dW(t)+\int_{Z}{\bf G}(t, z)\tilde{N_{1}}(dz, dt),
\end{align}
where $\tilde{N_{1}}(dz, dt):=N_{1}(dz, dt)-\nu_{1}(dz)dt$ and $\nu_{1}(dz)dt$ is the intensity measure of $N_{1}(dz, dt)$. 

Let $m>0$ be a fixed integer, $\mathcal{S}=\{1, 2,\cdots, m\}$, and $\{\mathfrak{r}(t):t\in\mathbb{R}^{+}\}$ be an ergodic right continuous Markov chain taking values in $\mathcal{S}$. The following equation whose noise terms depend on the Markov chain $\mathfrak{r}(t)$ allows for transition in the type of  random forces that perturb the Navier-Stokes equation:
\begin{align}\label{equation 0 MS paper}
\begin{split}
&{\bf du}(t)+[\nu{\bf Au}(t)+{\bf B}({\bf u}(t))]dt
\\&={\bf f}(t)dt+\sigma(\mathfrak{r}(t))dW(t)+\int_{Z}{\bf G}(\mathfrak{r}(t-), z)\tilde{N}_{1}(dz, dt)
\end{split}
\end{align}
and is called the stochastic Navier-Stokes equation with Markov switching.

The stochastic Navier-Stokes equation with Markov switching was introduced in the earlier work of the authors \cite{SNSEs markov}, and the existence of a  weak solution (in the sense of both partial differential equations and stochastic analysis) was obtained under suitable hypotheses for multiplicative noises. 

The objective of the present article is to study the asymptotic behavior of equation \eqref{equation 0 MS paper}. Originally, it was Kolmogorov's idea to introduce a Wiener process to the right side of the equation \eqref{eq} in order to investigate the existence of invariant measures (see, e.g., \cite{Vishik}). From then on, several works on the study of invariant measures of the two-dimensional stochastic Navier-Stokes equations appeared with an additive noise driven by a Wiener process (see, e.g., \cite{Albeverio, Flandoli, Flandoli and Maslowski, FG, Mattingly, Sundar measure} ). For the stochastic Navier-Stokes in three-dimensional space, the study of the invariant measure is slight different from the case in two-dimensional space. Due to the lack of uniqueness of the solution, it is hard to have a well-defined transition probability to the system (see, e.g., \cite{FG}). However, in a work of Da Prato and Debussche \cite{DD}, they constructed a transition semigroup that admits a unique invariant measure for the three-dimensional stochastic Navier-Stokes equations driven by Wiener process. Later, the technique was adapted by Mohan, Sakthivel, and Sritharan \cite{MSS} to construct a transition semigroup that admits a unique invariant measure for the three-dimensional stochastic Navier-Stokes equations with L{\'e}vy noise. For more results on the ergodicity of the stochastic Navier-Stokes equations, we refer the interested reader to the survey article by Debussche \cite{Debussche}.

The introduction of a Markov chain into the noise terms of a stochastic system may be traced back to Skorohod \cite{Skorohod}. In \cite{Skorohod}, Skorohod introduced a Markov chain in the the noise term of a stochastic system and then studied ergodic behaviors of the resulting system with small noise. Later, several results regarding the ergodicity of stochastic differential equations with Markov switching appeared in literature (see, e.g., \cite{Mao Yuan, Mao, MYY}). The interested reader may consult the books by Mao and Yuan \cite{Mao book} and Yin and Zhu \cite{hybrid}. 

A novelty in our work consists in the introduction of Markov chain in order to allow for transitions in the types of random forces that perturb the Navier-Stokes system. In addition, the system under consideration is a three-dimensional system. The nonlinearity in such a case is less regular. To overcome the difficulties caused by the nonlinearity of the system, we follow Leray's idea \cite{Leray} to regularized the nonlinear term in \eqref{equation 0 MS paper}:
\begin{align}\label{equation 1 MS paper}
\begin{split}
&{\bf du}^{\epsilon}(t)+[\nu{\bf Au}^{\epsilon}(t)+{\bf B}_{k_{\epsilon}}({\bf u}^{\epsilon}(t))]dt
\\&={\bf f}(t)dt+\sigma(\mathfrak{r}(t))dW(t)+\int_{Z}{\bf G}(\mathfrak{r}(t-), z)\tilde{N}_{1}(dz, dt)
\end{split}
\end{align}
for each $\epsilon>0$, and refer it the regularized equation (the detailed definition of ${\bf B}_{k_{\epsilon}}$ will be given in Section \ref{pre}). For the regularized equation, we showed that it will ultimately approach to a steady state, which we refer to the exponential stability (Theorem \ref{exponential stability MS paper}). Next, by employing the method of Krylov-Bogolyubov, we constructed a stationary measure to the regularized system \eqref{equation 1 MS paper}, which together with Theorem \ref{exponential stability MS paper} leads the uniqueness. Moreover, by \cite[Theorem 3.2.6]{DZ}, we see that such a measure is indeed ergodic (Theorem \ref{existence of stationary MS paper}).

Let $\{{\bf u}^{\epsilon}\}_{\epsilon>0}$ be a family of solutions to equation \eqref{equation 1 MS paper}. It is shown in \cite{SNSEs markov} that there is a subsequence $\epsilon_{k}$ of $\epsilon$ such that ${\bf u}^{\epsilon_{k}}\rightarrow{\bf u}$ weakly in the ``path space'' (see equation \eqref{the path space} below), as $k\rightarrow\infty$, and the function ${\bf u}$ is a solution to equation \eqref{equation 0 MS paper}. Denote by $\lambda^{\epsilon}$ the ergodic measure induced by ${\bf u}^{\epsilon}$. Then we showed in Theorem \ref{existence of stationary 0 MS paper} that
\begin{enumerate}
\item the solution ${\bf u}$ induces a stationary measure $\lambda$ to the system \eqref{equation 0 MS paper}, and
\item $\lambda^{\epsilon_{\ell}}\rightarrow\lambda$, as $\ell\rightarrow \infty$, where $\epsilon_{\ell}$ is a further subsequence of $\epsilon_{k}$.
\end{enumerate}

The rest of the article is organized as follows. Prerequisites and functional analytic setup, the hypotheses for additive noises, and essential results in \cite{SNSEs markov} will be introduced and recalled in Section \ref{pre}. Section \ref{regular} is devoted to the study of the regularized system \eqref{equation 1 MS paper}. We will deduce a priori estimates to the regularized system \eqref{equation 1 MS paper}. Then we present the result of exponential stability (Theorem \ref{exponential stability MS paper}) and construct the unique ergodic measure to the regularized system (Theorem \ref{existence of stationary MS paper}). 
In Section \ref{original}, we prove that equation \eqref{equation 0 MS paper} admits a stationary measure, and such a stationary measure is a limit of the family of ergodic measures of the regularized system (Theorem \ref{existence of stationary 0 MS paper}).

\section{Prerequisites and Functional Analytic Setup}\label{pre}
\subsection{Basic Results on Convolution}
First, we recall some properties of convolution in order to explain regularization. The interested reader may consult, e.g., \cite[Appendix C.5.]{Evans} for more details. If $U\subset\mathbb{R}^{3}$ is open and $\epsilon>0$, we write
$
U_{\epsilon}:=\{x\in U: \text{dist}(x, \partial U)>\epsilon\}.
$
Define the function $\eta\in C^{\infty}(\mathbb{R}^{3})$ by
\begin{align*}
\eta(x):=
\begin{cases}
C\exp\Big(\frac{1}{|x|^{2}-1}\Big)& \mbox{if}\quad |x|<1\\
0 & \mbox{if}\quad |x|\geq 1,
\end{cases}
\end{align*}
where the constant $C>0$ is selected so that $\int_{\mathbb{R}^{3}}\eta dx=1$. For each $\epsilon>0$, set
\begin{align*}
\eta_{\epsilon}(x):=\frac{1}{\epsilon^{3}}\eta\Big(\frac{x}{\epsilon}\Big).
\end{align*}
We call $\eta$ the standard mollifier. The function $\eta_{\epsilon}$ is smooth with support in $B(0, \epsilon)$ and satisfy
$
\int_{\mathbb{R}^{3}}\eta_{\epsilon}dx=1.
$
If $f: U\rightarrow\mathbb{R}$ is locally integrable, define the mollification operator by 
\begin{align}\label{k_{epsilon}}
k_{\epsilon}f:=\eta_{\epsilon}\ast f \quad\mbox{in} \quad U_{\epsilon},
\end{align}
i.e., 
$
k_{\epsilon}f=\int_{U}\eta_{\epsilon}(x-y)f(y)dy=\int_{B(0, \epsilon)}\eta(y)f(x-y)dy
$
for $x\in U_{\epsilon}$.
The next lemma collects some properties of the mollification operator. The interested reader may consult, e.g., \cite[Theorem 7 in Appendix C.5]{Evans} or \cite[Lemma 6.3]{O-P} for details.
\begin{lemma}\label{convolution}
The mollification operator enjoys the following properties:
\begin{enumerate}
\item $k_{\epsilon}f\in C^{\infty}(U_{\epsilon})$.
\item If $1\leq p<\infty$ and $f\in L^{P}_{loc}(U)$, then $k_{\epsilon}f\rightarrow f$ in $L^{p}_{loc}(U)$.
\item If $1\leq p<\infty$ and $f\in L^{p}_{loc}(U)$, then $\|k_{\epsilon}f\|_{L^{p}_{loc}(U)}\leq \|f\|_{L^{p}_{loc}(U)}$.
\end{enumerate}
\end{lemma}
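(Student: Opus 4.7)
The plan is to prove the three claims in order, since each builds on the previous ones, and in each case I would exploit the fact that $\eta_{\epsilon}$ is smooth, nonnegative, compactly supported in $B(0,\epsilon)$, and integrates to one.

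For (1), the strategy is to differentiate under the integral sign. For $x \in U_{\epsilon}$, the ball $B(x,\epsilon)$ lies inside $U$, so
\begin{equation*}
k_{\epsilon}f(x) = \int_{B(x,\epsilon)} \eta_{\epsilon}(x-y) f(y)\,dy.
\end{equation*}
Since $\eta_{\epsilon} \in C^{\infty}_c(\mathbb{R}^3)$, the partial derivatives $\partial^{\alpha}\eta_{\epsilon}(x-y)$ are continuous and bounded uniformly in $x$ ranging over any compact $K \subset U_{\epsilon}$. Local integrability of $f$ and the dominated convergence theorem then justify interchanging the limit in a difference quotient with the integral, so inductively $\partial^{\alpha}(k_{\epsilon}f)(x) = \int_U (\partial^{\alpha}\eta_{\epsilon})(x-y) f(y)\,dy$ for every multi-index $\alpha$. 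This delivers smoothness on $U_{\epsilon}$.

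For (3), I would apply H\"older's inequality to the convolution, writing $\eta_{\epsilon}(x-y) = \eta_{\epsilon}(x-y)^{1/p'}\eta_{\epsilon}(x-y)^{1/p}$ (with the trivial modification when $p=1$) and using $\int \eta_{\epsilon} = 1$ to obtain the pointwise bound
\begin{equation*}
|k_{\epsilon}f(x)|^p \leq \int_U \eta_{\epsilon}(x-y)\,|f(y)|^p\,dy.
\end{equation*}
Integrating this over a compact $K \subset U_{\epsilon}$ and swapping the order of integration via Fubini, the support condition on $\eta_{\epsilon}$ restricts $y$ to an $\epsilon$-neighborhood $K_{\epsilon}$ of $K$ in $U$, yielding $\|k_{\epsilon}f\|_{L^p(K)} \leq \|f\|_{L^p(K_{\epsilon})}$, which gives the claimed local bound.

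For (2), the plan is a standard density argument. On any compact $K \subset U$, by continuity of $L^p$ functions under translation (or equivalently Lusin's theorem together with Tietze), I can approximate $f|_{K_{\epsilon}}$ in $L^p$ by a function $g \in C_c(U)$. For $g$, uniform continuity on the compact support yields $k_{\epsilon}g \to g$ uniformly, hence in $L^p(K)$. Splitting
\begin{equation*}
\|k_{\epsilon}f - f\|_{L^p(K)} \leq \|k_{\epsilon}(f-g)\|_{L^p(K)} + \|k_{\epsilon}g - g\|_{L^p(K)} + \|g - f\|_{L^p(K)},
\end{equation*}
the first term is controlled by (3), the middle vanishes as $\epsilon \to 0$, and the last is small by choice of $g$. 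Taking $\epsilon \to 0$ and then letting the approximation error tend to zero finishes the argument. The only mild subtlety is to make sure the approximating $g$ is chosen on a slightly enlarged compact set $K_{\epsilon_0}$ so that the bound from (3) can be applied uniformly for small $\epsilon$; apart from that bookkeeping, every step is routine.
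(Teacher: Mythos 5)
The paper does not prove this lemma; it simply cites the standard references (Evans, Appendix C.5, Theorem 7, and O\.za\'nski--Pooley, Lemma 6.3). Your argument is precisely the standard proof given there --- differentiation under the integral for smoothness, H\"older with $\int\eta_{\epsilon}=1$ plus Fubini for the local $L^{p}$ bound, and a $C_{c}$-density argument for the convergence --- and it is correct, including the bookkeeping remark about enlarging the compact set so that (3) applies uniformly.
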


\subsection{Function Space and Operators}
Let $G\subset\mathbb{R}^{3}$ be a bounded domain with smooth boundary, $\mathcal{D}(G)$ be the space of $C^{\infty}$-functions with compact support contained in $G$, and $\mathcal{V}\colonequals\{{\bf u}\in\mathcal{D}(G): \nabla\cdot{\bf u}=0\}$. Let $H$ and $V$ be the completion of $\mathcal{V}$ in $L^{2}(G)$ and $W^{1, 2}_{0}(G)$, respectively. Then it can be shown that (see, e.g., \cite[Sec. 1.4, Ch. I]{Temam})
\begin{align*}
H&=\{{\bf u}^{}\in L^{2}(G): \nabla\cdot{\bf u}^{}=0,\ {\bf u}^{}\cdot{\bf n}\big|_{\partial G}=0\},\\
V&=\{{\bf u}^{}\in W^{1, 2}_{0}(G):  \nabla\cdot{\bf u}^{}=0\},
\end{align*}
and we denote the $H$-norm  ($V$-norm, resp.) by $|\cdot|$ ($\|\cdot\|$, resp.) and the inner product on $H$ (the inner product on $V$, resp.) by $(\cdot, \cdot)$ ($(\!(\cdot,\cdot)\!)$, resp.). The duality paring between $V'$ and $V$ is denoted by $\langle\cdot, \cdot\rangle_{V}$ or simply $\langle\cdot, \cdot\rangle$ when there is no ambiguity. In addition, we have the following inclusion between the spaces:
$
V\hookrightarrow H\hookrightarrow V',
$
and both of the inclusions $V\hookrightarrow H$ and $H\hookrightarrow V'$ are compact embeddings ( see, e.g., \cite[Lemma 1.5.1 and 1.5.2, Ch. II]{sohr}).

Let $\bf A: V\rightarrow V'$ be the Stokes operator 
and $\lambda_{1}$ be the first eigenvalue of $\bf A$. Then the Poincar\'e inequality in the context of this article appears:
\begin{align}\label{Poincare ineq}
\lambda_{1}|{\bf u}|^{2}\leq\|{\bf u}\|^{2}
\end{align}
for all ${\bf u}\in V$ (see \cite[Eq. (5.11), Ch. II]{FMRT}).

Define 
$b(\cdot,\cdot,\cdot): V\times V\times V\rightarrow\mathbb{R}$ by
\begin{align*}
b({\bf u},{\bf v},{\bf w}):=\sum_{i,j=1}^{3}\int_{G}{u}_i\frac{\partial v_j}{\partial x_i}w_j dx.
\end{align*}
Then $b$ is a trilinear form (see, e.g., \cite[Sec. 1 Ch. II]{Temam}) which induces a bilinear form ${\bf B}({\bf u,v})$ by $b({\bf u},{\bf v,w})=\langle {\bf B}{\bf (u,v),w}\rangle$.

For each $\epsilon>0$, define  
$b(k_{\epsilon}\cdot,\cdot,\cdot): V\times V\times V\rightarrow\mathbb{R}$ by
\begin{align*}
b(k_{\epsilon}{\bf u},{\bf v},{\bf w}):=\sum_{i,j=1}^{3}\int_{G}{(\eta_{\epsilon}\ast u)}_i\frac{\partial v_j}{\partial x_i}w_j dx,
\end{align*}
which induces a bilinear form ${\bf B}_{k_{\epsilon}}({\bf u,v})$ by $b(k_{\epsilon}{\bf u},{\bf v,w})=\langle {\bf B}_{k_{\epsilon}}{\bf (u,v),w}\rangle$. The regularization rises the regularity of the first component in $b$, therefore, one may employ the  (generalized) H\"older inequality and the Young convolution inequality to deduce
\begin{align}
|b(k_{\epsilon}{\bf u}, {\bf v}, {\bf w})|
\leq \|\eta_{\epsilon}\ast {\bf u}\|_{6}\|\nabla {\bf v}\|_{2}\|{\bf w}\|_{3}
\leq \|\eta_{\epsilon}\|_{\frac{6}{5}}\|{\bf u}\|_{3}\|\nabla {\bf v}\|_{2}\|{\bf w}\|_{3}\label{ineq of b},
\end{align}
which together with  Sobolev embedding and interpolation inequalities further implies
\begin{align}
|b(k_{\epsilon}{\bf u,v,w})|
\leq C_{\epsilon}\|{\bf u}\|^{\frac{1}{2}}|{\bf u}|^{\frac{1}{2}}\|{\bf v}\| \|{\bf w}\|^{\frac{1}{2}}|{\bf w}|^{\frac{1}{2}}\label{b_{k}uvw}.
\end{align}
In particular, when ${\bf u=w}$, we have
\begin{align}
|b(k_{\epsilon}{\bf u,v,u})|\leq C_{\epsilon}\|{\bf u}\|\cdot|{\bf u}|\cdot\|{\bf v}\|.\label{b_{k} uvu}
\end{align}
As shall be seen later, we will work with a fixed $\epsilon$ for regularized equations, therefore, we shall assume that $C_{\epsilon}=1$ for the sake of simplicity.

Denoted by $\{\tau_{i}\}_{i=1}^{4}$ the topologies
\begin{alignat*}{2}
\tau_{1}&=\mbox{$J$-topology}\quad  &&\mbox{on} \quad \mathcal{D}([0, T]; V'),\\
\tau_{2}&=\mbox{weak topology}\quad &&\mbox{on}\quad L^{2}(0, T; V),\\
\tau_{3}&=\mbox{weak-star topology}\quad &&\mbox{on}\quad L^{\infty}(0, T; H),\\
\tau_{4}&=\mbox{strong topology}\quad&&\mbox{on}\quad L^{2}(0, T; H),
\end{alignat*}
and $\Omega_{i}$ the spaces
\begin{align*}
\Omega_{1}&=\mathcal{D}([0, T]; V'),\\
\Omega_{2}&=L^{2}(0, T; V),\\
\Omega_{3}&=L^{\infty}(0, T; H),\\
\Omega_{4}&=L^{2}(0, T; H).
\end{align*}
Then $\{(\Omega_{i}, \tau_{i})\}_{i=1}^{4}$ are all Lusin spaces (a topological space that is homeomorphic to a Borel set of a Polish space).

\begin{definition}\label{the path space of u}
Define the space $\Omega^{\ast}$ by
\begin{align*}
\Omega^{*}=\cap_{i=1}^{4}\Omega_{i}.
\end{align*}
Let $\tau$ be the supremum of the topologies\footnote{The coarest topology that is finer than each $\tau_{i}$. See, e.g., \cite[Sec. 5.2]{Howes}} induced on $\Omega^{\ast}$ by all $\tau_{i}$. Then it follows from a result of Metivier \cite[Proposition 1, Ch. IV]{Metivier} that\footnote{Note that all the natural inclusion $\Omega_{i}\hookrightarrow\Omega_{1}$, $i=2, 3, 4$, are continuous.}
\begin{enumerate}
\item $(\Omega^{\ast}, \tau)$ is a Lusin space.
\item Let $\{\mu_{k}\}_{k\in\mathbb{N}}$ be a sequence of Borel probability laws on $\Omega^{\ast}$ (on the Borel $\sigma$-algebra $\mathcal{B}(\tau)$) such that their images $\{\mu^{i}_{k}\}_{k\in\mathbb{N}}$ on $(\Omega_{i}, \mathcal{B}(\tau_{i}))$ are tight for $\tau_{i}$ for all $i$. Then $\{\mu_{k}\}_{k\in\mathbb{N}}$ is tight for $\tau$. 
\end{enumerate}
\end{definition}
Let $(\Omega, \mathcal{F}, \mathcal{P})$ be a (complete) probability space on which the following are defined:
\begin{enumerate}
\item $W=\{W(t): 0\leq t\leq T\}$, an $H$-valued $Q$-Wiener process.
\item $N=\{N(z, t): 0\leq t\leq T\quad\mbox{and}\quad z\in Z\}$, the Poisson random measure.
\item $\mathfrak{r}=\{\mathfrak{r}(t): 0\leq t\leq T\}$, the Markov chain.
\item $\xi$, an $H$-valued random variable.
\end{enumerate}
Assume that $\xi$, $W$, $N$, and $\mathfrak{r}(t)$ are mutually independent. For each $t$, define the $\sigma$-field
\begin{align*}
\mathcal{F}_{t}:=\sigma(\xi, \mathfrak{r}(t), W(s), N(z, s): z\in Z, 0\leq s\leq t)\ \cup\ \{\text{all $\mathcal{P}$-null sets in $\mathcal{F}$}\}.
\end{align*} 
Then it is clear that $(\mathcal{F}_{t})$ satisfies the usual conditions, and both $W(t)$ and $N(z, t)$ are $\mathcal{F}_{t}$-adapted processes.

Denote by $\mathcal{J}$ the $J$-topology in the space $\mathcal{D}([0, T]; S)$. Then the path space of the solution to equations \eqref{equation 0 MS paper} and \eqref{equation 1 MS paper} is the following space $\Omega^{\dagger}$ equipped with the topology $\tau^{\dagger}$.
\begin{align}\label{the path space}
\begin{split}
\Omega^{\dagger}&:=\Omega^{\ast}\times\mathcal{D}([0, T]; \mathcal{S}),\\
\tau^{\dagger}&:=\tau\times\mathcal{J}.
\end{split}
\end{align}

\subsection{Noise Terms}
\begin{enumerate}
\item[(i)] Let $Q\in\mathcal{L}(H)$ be a nonnegative, symmetric, trace-class operator. Define $H_{0}\colonequals Q^{\frac{1}{2}}(H)$ with the inner product given by 
$
(u, v)_{0}:=(Q^{-\frac{1}{2}}u, Q^{-\frac{1}{2}}v)_{H}
$
for $u, v\in H_{0}$, where $Q^{-\frac{1}{2}}$ is the inverse of $Q$.
Then it follows from \cite[Proposition C.0.3 (i)]{concise} that $(H_{0}, (\cdot, \cdot)_{0})$ is again a separable Hilbert space. 
Let $\mathcal{L}_{2}(H_{0}, H)$ denote the separable Hilbert space of the Hilbert-Schmidt operators from $H_{0}$ to $H$. Then it can be shown that (see, e.g., \cite[p. 27]{concise})
$
\|L\|_{\mathcal{L}_{2}(H_{0}, H)}=\|L\circ Q^{\frac{1}{2}}\|_{\mathcal{L}_{2}(H, H)}
$
for each $L\in \mathcal{L}_{2}(H_{0}, H)$. Moreover, we write 
$
\|L\|_{L_{Q}}=\|L\|_{\mathcal{L}_{2}(H_{0}, H)}
$
for simplicity.

Let $T>0$ be a fixed real number and $(\Omega, \mathcal{F}, \{\mathcal{F}_{t}\}_{0 \le t\le T}, \mathcal{P})$ be a filtered probability space.Let $W$ be an $H$-valued Wiener process with covariance $Q$. Let $\sigma: [0, T]\times\Omega\rightarrow\mathcal{L}_{2}(H_{0}, H)$ be jointly measurable and adapted. If  $\mathbb{E}\int^{T}_{0}\|\sigma(s)\|^{2}_{\mathcal{L}_{2}(H_{0}, H)}ds<\infty$, then for $t\in [0, T]$, the stochastic integral
$
\int^{t}_{0}\sigma(s)dW(s)
$
is well-defined and is an $H$-valued continuous square integrable martingale.

\item[(ii)] Let $({Z}, \mathcal{B}({ Z}))$ be a measurable space, ${\bf M}$ be the collection of all of nonnegative integer-valued measures on $({Z}, \mathcal{B}({ Z}))$, and $\mathcal{B}({\bf M})$ be the smallest $\sigma$-field on ${\bf M}$ with respect to which all
$
\eta\mapsto \eta(B)
$
are measurable, where $\eta\in{\bf M}$, $\eta(B)\in\mathbb{Z}^{+}\cup\{\infty\}$, and $B\in\mathcal{B}({ Z})$. Let
$N: \Omega\rightarrow {\bf M}$ be a Poisson random measure with intensity measure $\nu$. 

For a Poisson random measure $N(dz, ds)$,  $\tilde{N}(dz, ds)\colonequals N(dz, ds)-\nu(dz)ds$ defines its compensation. Then it can be shown that (see, e.g, \cite[Sec. 3, Ch. II]{I-W}) $\tilde{N}(dz, ds)$ is a square integrable martingale, and for predictable $f$ such that
\begin{align*}
&\mathbb{E}\int^{t+}_{0}\int_{Z}|f(\cdot, z, s)|\nu(dz)ds<\infty,\,\,\text{then}\\
&\int^{t+}_{0}\int_{Z}f(\cdot, z, s)\tilde{N}(dz, ds)
\\&=\int^{t+}_{0}\int_{Z}f(\cdot, z, s)N(dz, ds)-\int^{t}_{0}\int_{Z}f(\cdot, z, s)\nu(dz)ds
\end{align*}
 is a well-defined $\mathcal{F}_{t}$-martingale.

\item[(iii)] Let $m\in\mathbb{N}$. Let $\{\mathfrak{r}(t): t\in\mathbb{R}^{+}\}$ be a right continuous  ergodic  Markov chain with generator $\Gamma=(\gamma_{ij})_{m\times m}$ taking values in $\mathcal{S}:=\{1, 2, 3, .....m\}$ such that
\begin{align*}
\mathcal{R}_{t}(i, j)&=\mathcal{R}(\mathfrak{r}(t+h)=j|\mathfrak{r}(t)=i )
\\&=\left\{\begin{array}{rcl}
\gamma_{ij}h+o(h)&\mbox{if}& i\neq j,\\
1+\gamma_{ii}h+o(h)&\mbox{if}&i= j,\\
\text{and}\,\,\gamma_{ii}&=-\sum_{i\neq j}\gamma_{ij}.
\end{array}\right.
\end{align*}
The transition probability $\mathcal{R}_{t}(i, j)$ satisfies the Chapman-Kolmogorov equation:
\begin{align*}
\mathcal{R}_{t+s}(i, k)=\sum_{j=1}^{m}\mathcal{R}_{s}(i, j)\mathcal{R}_{t}(j, k).
\end{align*}
The Markov chain $\mathfrak{r}(t)$ is assumed to be ergodic, therefore, there exists a stationary distribution $\pi=(\pi_{1}, \cdots, \pi_{m})$ for this Markov chain $\mathfrak{r}(t)$, where $\pi_{j}$ satisfies
\begin{align*}
\lim_{t\rightarrow\infty}\mathcal{R}_{t}(i, j)=\pi_{j}
\end{align*}

In addition, $\mathfrak{r}(t)$ admits the following stochastic integral representation  (see, e.g, \cite[Sec. 2.1, Ch. 2]{Skorohod}):
Let $\Delta_{ij}$ be consecutive, left closed, right open intervals of the real line each having length $\gamma_{ij}$ such that
\begin{align*}
\Delta_{12}&=[0, \gamma_{12}),\ 
\Delta_{13}=[\gamma_{12}, \gamma_{12}+\gamma_{13}),\cdots\\
\Delta_{1m}&=\Big[\sum_{j=2}^{m-1}\gamma_{1j}, \sum_{j=2}^{m}\gamma_{1j}\Big),\cdots\\
\Delta_{2m}&=\Big[\sum_{j=2}^{m}\gamma_{1j}+\sum_{j=1, j\neq 2}^{m-1}\gamma_{2j}, \sum_{j=2}^{m}\gamma_{1j}+\sum_{j=1, j\neq 2}^{m}\gamma_{2j}\Big)
\end{align*}
and so on. Define a function
$
h:\mathcal{S}\times\mathbb{R}\rightarrow\mathbb{R}
$
by
\begin{align*}
h( i, y)=
\begin{cases}
j-i & \mbox{if} \quad y\in\Delta_{ij},\\
0& \mbox{otherwise}.
\end{cases}
\end{align*}
Then
$
d\mathfrak{r}(t)=\int_{\mathbb{R}}h(\mathfrak{r}(t-), y)N_{2}(dt, dy),
$
with initial condition $\mathfrak{r}(0)=\mathfrak{r}_{0}$, where $N_{2}(dt, dy)$ is a Poisson random measure with intensity measure $dt\times\mathfrak{L}(dy)$, in which $\mathfrak{L}$ is the Lebesgue measure on $\mathbb{R}$.

We assume that such a Markov chain, Wiener process, and the Poisson random measure are independent.

\end{enumerate}

\subsection{Hypotheses and Essential Results of Existence Theorems}
In this section, we introduce the hypotheses for the noise terms and recall some essential results regarding the solution to equations \eqref{equation 0 MS paper} and \eqref{equation 1 MS paper}.

Throughout this article,  the functions $\sigma:\mathcal{S}\rightarrow \mathcal{L}_{2}(H_{0}, H)$ and ${\bf G}:\mathcal{S}\times Z\rightarrow H$ assumed to satisfy the following Hypotheses $\bf A$: for any $i\in\mathcal{S}$, there exist a constant $K>0$ such that
\begin{enumerate}
\item [{\bf A1.}]  $\|\sigma(i)\|^{2}_{L_{Q}}\leq K$ and
 
\item [{\bf A2.}]
$\int_{Z}|\mathbf{G}(i, z)|^{p}\nu(dz)\leq K$ for $p=1, 2$, and $4$.

\end{enumerate}
It is clear that the Hypotheses ${\bf A}$ is a subclass of the Hypothesis ${\bf H}$ in \cite{SNSEs markov}. Therefore, equation \eqref{equation 1 MS paper} admits a unique strong solution, and equation \eqref{equation 0 MS paper} admits a weak solution. To be precise, we state the following existence theorem for the benefit of the reader.

\begin{theorem}\label{existence theorem}
Assume  $\mathbb{E}(|{\bf u}_{0}|^{3})<\infty$ and ${\bf f}\in L^{3}(0, T; V')$. Then under the Hypotheses ${\bf A}$, 
\begin{enumerate}
\item equation \eqref{equation 1 MS paper} admits a unique strong solution for each $\epsilon>0$;
\item equation \eqref{equation 0 MS paper} admits a weak solution.
\end{enumerate}
\end{theorem}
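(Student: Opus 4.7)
The plan is to reduce both claims to the existence theory developed in the authors' earlier work \cite{SNSEs markov}, and to recycle the Galerkin--tightness scheme that underlies it. First observe that Hypotheses \textbf{A} are a strict specialization of Hypothesis \textbf{H} in \cite{SNSEs markov}: \textbf{A1} replaces a state-dependent Lipschitz condition on $\sigma$ by a uniform bound in the Markov state, while \textbf{A2} replaces the growth assumption on $\mathbf{G}$ by uniform $L^p$ bounds for $p \in \{1,2,4\}$. Since $\mathbb{E}|{\bf u}_0|^3 < \infty$ and ${\bf f}\in L^3(0,T;V')$ already match the standing data assumptions there, both existence statements follow at once by quotation.

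For a self-contained proof of (1), I would set up a Galerkin approximation ${\bf u}^\epsilon_n$ on the span of the first $n$ eigenfunctions of $\mathbf{A}$, which reduces the problem to a finite-dimensional SDE with jumps and Markov switching whose local well-posedness is standard given \eqref{b_{k} uvu}. Applying It\^o's formula to $|{\bf u}^\epsilon_n|^2$ and $|{\bf u}^\epsilon_n|^3$ (the latter to exploit the third-moment assumption on ${\bf u}_0$), and using \eqref{Poincare ineq} together with Hypotheses \textbf{A} and Burkholder--Davis--Gundy for the jump martingale, yields a priori bounds in $L^\infty(0,T;L^p(\Omega;H))\cap L^2(\Omega\times(0,T);V)$ uniform in $n$. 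Aldous-type estimates on the stochastic integrals and on $\mathfrak{r}$ then give tightness of the laws in each $(\Omega_i,\tau_i)$, hence in $(\Omega^\dagger,\tau^\dagger)$ by Definition \ref{the path space of u}(2). A Skorokhod representation on a new probability space produces an a.s.\ limit; the essential gain from the mollification is that on bounded sets the nonlinearity ${\bf B}_{k_\epsilon}$ is sequentially continuous with respect to the strong $L^2(0,T;H)$ topology, so the nonlinear term passes to the limit and one obtains a martingale solution.

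Pathwise uniqueness upgrades this to a strong solution via Yamada--Watanabe. For two solutions ${\bf u}^\epsilon_1,{\bf u}^\epsilon_2$ on the same stochastic basis, the difference ${\bf w}={\bf u}^\epsilon_1-{\bf u}^\epsilon_2$ is noise-free (the additive noise and switching cancel), its nonlinear contribution splits as $b(k_\epsilon{\bf u}^\epsilon_1,{\bf w},{\bf w})+b(k_\epsilon{\bf w},{\bf u}^\epsilon_2,{\bf w})$, and each piece is controlled by \eqref{b_{k}uvw}--\eqref{b_{k} uvu}; after absorbing the $V$-norm of ${\bf w}$ into the dissipation by Young, the It\^o derivative of $|{\bf w}|^2$ satisfies $\tfrac{d}{dt}|{\bf w}|^2 \leq \phi(t)|{\bf w}|^2$ with $\phi \in L^1(0,T)$ almost surely, and Gronwall forces ${\bf w}\equiv 0$.

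For part (2), run the same tightness scheme on the family $\{{\bf u}^\epsilon\}_{\epsilon>0}$ from (1) as $\epsilon\downarrow 0$, using uniform-in-$\epsilon$ energy estimates (derived separately in Section \ref{regular}). The main obstacle, and the reason $\Omega^*$ carries the supremum of four topologies, is that the unmollified trilinear form ${\bf B}$ is not continuous in the weak topology of $L^2(0,T;V)$, so one cannot close the nonlinear term using $\tau_2$ alone; however, ${\bf B}$ is continuous on bounded sets with respect to the strong $L^2(0,T;H)$ topology, which is precisely the role of $\tau_4$ in Definition \ref{the path space of u}. After Skorokhod representation the limit inherits the energy bounds and satisfies \eqref{equation 0 MS paper} in the weak probabilistic sense, completing the proof.
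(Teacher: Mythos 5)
Your proposal is correct and its first paragraph is exactly the paper's own argument: the authors simply observe that Hypotheses \textbf{A} form a subclass of Hypothesis \textbf{H} in \cite{SNSEs markov} and quote the existence results from there without further proof. The additional Galerkin--tightness--Skorokhod and Yamada--Watanabe sketch you supply is a reasonable outline of the machinery behind the cited theorem, but it goes beyond anything the present paper attempts.
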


\section{The Regularized Equation}\label{regular}
In this section, we study the ergodic properties of the regularized equation \eqref{equation 1 MS paper} with ${\bf u}^{\epsilon}(0)={\bf u}^{\epsilon}_{0}=k_{\epsilon}{\bf u}_{0}$ being an $H$-valued random variable, where $k_{\epsilon}\cdot$ is the mollification operator defined in \eqref{k_{epsilon}}. The next proposition gives a priori estimates for the solution ${\bf u}^{\epsilon}$ under Hypotheses $\bf A$ . It is worth mentioning that the exponent of the time parameter $T$ appears in the upper bounds is 1 rather than any higher power (cf. \cite[Proposition 3.1]{SNSEs markov}), which is crucial in using the Krylov–Bogolyubov method.

\begin{proposition}[A priori estimates]\label{enhanced estimates}
Let  $\epsilon>0$ and $T>0$ be fixed. Assume  $\mathbb{E}(|{\bf u}_{0}|^{3})<\infty$ and ${\bf f}\in L^{3}(0, T; V')$. Then the solution ${\bf u}^{\epsilon}$ of equation \eqref{equation 1 MS paper} satisfies the following estimates.
\begin{align}\label{L^2 strong MS paper}
\mathbb{E}|{\bf u}^{\epsilon}(t)|^{2}+\nu\mathbb{E}\int^{t}_{0}\|{\bf u}^{\epsilon}(s)\|^{2}ds 
\leq \mathbb{E}|{\bf u}_{0}|^{2}+\frac{1}{\nu}\mathbb{E}\int^{t}_{0}\|{\bf f}(s)\|^{2}_{V'}ds+2Kt
\end{align}
for each $t\in (0, T]$, and
\begin{align}\label{L^2 sup strong MS paper}
\mathbb{E}\sup_{t\in [0, T]}|{\bf u}^{\epsilon}(t)|^{2}+\nu\mathbb{E}\int^{T}_{0}\|{\bf u}^{\epsilon}(s)\|^{2}ds\leq 2\mathbb{E}|{\bf u}_{0}|^{2}+\frac{4}{3\nu}\mathbb{E}\int^{T}_{0}\|{\bf f}(s)\|^{2}ds+100KT.
\end{align}
\end{proposition}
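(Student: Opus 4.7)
The plan is to apply It\^o's formula to $|\mathbf{u}^{\epsilon}(t)|^{2}$ along the regularized equation \eqref{equation 1 MS paper}. After using the duality pairing $\langle\nu\mathbf{A}\mathbf{u}^{\epsilon},\mathbf{u}^{\epsilon}\rangle=\nu\|\mathbf{u}^{\epsilon}\|^{2}$ and expanding the jump contribution $|\mathbf{u}^{\epsilon}(s-)+\mathbf{G}(\mathfrak{r}(s-),z)|^{2}-|\mathbf{u}^{\epsilon}(s-)|^{2}$, It\^o's formula produces an identity of the schematic form
\[
|\mathbf{u}^{\epsilon}(t)|^{2}+2\nu\int_{0}^{t}\|\mathbf{u}^{\epsilon}(s)\|^{2}ds = |\mathbf{u}^{\epsilon}_{0}|^{2}+I_{\mathrm{nlin}}(t)+I_{f}(t)+I_{\sigma}(t)+I_{G}(t)+M^{W}(t)+M^{N}(t),
\]
where $I_{\mathrm{nlin}},I_{f},I_{\sigma},I_{G}$ collect the finite-variation contributions of the nonlinearity, the forcing, the Wiener covariation, and the Poisson compensator, while $M^{W},M^{N}$ are the two compensated martingales. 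The decisive observation is the cancellation
\[
\langle\mathbf{B}_{k_{\epsilon}}(\mathbf{u}^{\epsilon}),\mathbf{u}^{\epsilon}\rangle=b(k_{\epsilon}\mathbf{u}^{\epsilon},\mathbf{u}^{\epsilon},\mathbf{u}^{\epsilon})=0,
\]
which holds because convolution with the scalar mollifier $\eta_{\epsilon}$ preserves the divergence-free condition and the standard antisymmetry identity $b(\mathbf{v},\mathbf{w},\mathbf{w})=0$ (for $\nabla\cdot\mathbf{v}=0$ and $\mathbf{w}\in V$) applies. This cancellation is what allows every upper bound below to remain linear in $T$ rather than Gr\"onwall-amplified.

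\textbf{The first estimate \eqref{L^2 strong MS paper}.} Taking expectations in the It\^o identity kills $M^{W}$ and $M^{N}$. I then apply $2|\langle \mathbf{f},\mathbf{u}^{\epsilon}\rangle|\leq \nu^{-1}\|\mathbf{f}\|_{V'}^{2}+\nu\|\mathbf{u}^{\epsilon}\|^{2}$ and absorb $\nu\|\mathbf{u}^{\epsilon}\|^{2}$ into the dissipation term on the left. Hypothesis $\mathbf{A1}$ dominates $\mathbb{E} I_{\sigma}(t)$ by $Kt$, and Hypothesis $\mathbf{A2}$ with $p=2$ dominates $\mathbb{E} I_{G}(t)$ by $Kt$; together these account for the $2Kt$ on the right-hand side of \eqref{L^2 strong MS paper}.

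\textbf{The supremum estimate \eqref{L^2 sup strong MS paper}.} I first take $\sup_{s\in[0,T]}$ in the It\^o identity and then take expectation, so $M^{W}$ and $M^{N}$ must be controlled via the Burkholder-Davis-Gundy (BDG) inequality. For the Wiener term, Hypothesis $\mathbf{A1}$ bounds the quadratic variation by $4K\int_{0}^{T}|\mathbf{u}^{\epsilon}(s)|^{2}ds$, so BDG gives $\mathbb{E}\sup_{t\leq T}|M^{W}(t)|\leq C\,\mathbb{E}\bigl(4K\int_{0}^{T}|\mathbf{u}^{\epsilon}(s)|^{2}ds\bigr)^{1/2}$. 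For the compensated Poisson martingale, I use $(|\mathbf{u}+\mathbf{G}|^{2}-|\mathbf{u}|^{2})^{2}\leq 8|\mathbf{u}|^{2}|\mathbf{G}|^{2}+2|\mathbf{G}|^{4}$ together with Hypothesis $\mathbf{A2}$ at both $p=2$ and $p=4$ to obtain
\[
\mathbb{E}\sup_{t\leq T}|M^{N}(t)|\leq C\,\mathbb{E}\Bigl(8K\int_{0}^{T}|\mathbf{u}^{\epsilon}(s)|^{2}ds+2KT\Bigr)^{1/2}.
\]
In both BDG bounds I pass the sup outside the integral via $\int_{0}^{T}|\mathbf{u}^{\epsilon}|^{2}ds\leq T\sup_{s\leq T}|\mathbf{u}^{\epsilon}(s)|^{2}$, and then apply $\sqrt{ab}\leq \alpha a+(4\alpha)^{-1}b$ with $\alpha$ chosen so that exactly $\tfrac{1}{2}\mathbb{E}\sup_{s\leq T}|\mathbf{u}^{\epsilon}(s)|^{2}$ can be moved to the left-hand side. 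The residual contribution is linear in $T$ with a numerical multiple of $K$; careful arithmetic on the BDG and Young constants then produces the explicit prefactor $100K$.

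\textbf{Main obstacle.} The one delicate point, and the one the authors emphasize, is that the dependence on $T$ must remain linear. This is enabled precisely by the nonlinear cancellation $\langle\mathbf{B}_{k_{\epsilon}}(\mathbf{u}^{\epsilon}),\mathbf{u}^{\epsilon}\rangle=0$ and by the BDG-plus-Young absorption, both of which sidestep any Gr\"onwall iteration; no moment higher than quadratic on $\mathbf{u}^{\epsilon}$ enters. Tracking the BDG absorption constants to realize the exact coefficient $100K$ is tedious but routine, and is the only nontrivial bookkeeping in the argument.
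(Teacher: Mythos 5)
Your proposal follows essentially the same route as the paper's proof: It\^o's formula for $|\mathbf{u}^{\epsilon}|^{2}$ with the cancellation $b(k_{\epsilon}\mathbf{u}^{\epsilon},\mathbf{u}^{\epsilon},\mathbf{u}^{\epsilon})=0$, Young's inequality on the forcing term, Hypotheses $\mathbf{A1}$--$\mathbf{A2}$ for the noise contributions, and the Davis/BDG inequality followed by Young absorption of $\tfrac{1}{2}\mathbb{E}\sup_{t}|\mathbf{u}^{\epsilon}(t)|^{2}$ for the supremum estimate. The only differences are minor: the paper first localizes with stopping times $\tau_{N}$ before taking expectations (a step you should include to justify that the stochastic integrals are genuine martingales with vanishing expectation), and it decomposes the jump contribution as $2(\mathbf{u}^{\epsilon},\mathbf{G})\tilde{N}_{1}+|\mathbf{G}|^{2}N_{1}$ so that only the $p=2$ case of $\mathbf{A2}$ is needed there, whereas your direct BDG bound on the full compensated jump martingale also invokes $p=4$ --- both are available under $\mathbf{A2}$.
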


\begin{proof}
Let $N>0$. Define 
\begin{align*}
\tau_{N}:=\inf\{t\in[0, T]:&|{\bf u}^{\epsilon}(t)|^{2}+\int^{t}_{0}\|{\bf u}^{\epsilon}(s)\|^{2}ds>N  
\\ \mbox{or}\ &|{\bf u}^{\epsilon}(t-)|^{2}+\int^{t}_{0}\|{\bf u}^{\epsilon}(s)\|^{2}ds>N\}.
\end{align*}
Then for each $t\in[0, T]$, the It\^o formula implies
\begin{align}
&|{\bf u}^{\epsilon}(\tau_{N}\wedge t)|^{2}+2\nu\int^{\tau_{N}\wedge t}_{0}\|{\bf u}^{\epsilon}(s)\|^{2}ds\label{Ito MS MS paper}
\\&=|{\bf u}^{\epsilon}(0)|^{2}+2\int^{\tau_{N}\wedge t}_{0}\langle{\bf f}(s), {\bf u}^{\epsilon}(s)\rangle ds+\int^{\tau_{N}\wedge t}_{0}\|\sigma(\mathfrak{r}(s))\|^{2}_{L_{Q}}ds\nonumber
\\&\quad+2\int^{\tau_{N}\wedge t}_{0}\langle{\bf u}^{\epsilon}(s), \sigma(\mathfrak{r}(s))dW(s)\rangle\nonumber
\\&\quad+\int^{\tau_{N}\wedge t}_{0}\int_{Z}\Big(|{\bf u}^{\epsilon}(s)+{\bf G}(\mathfrak{r}(s-), z)|^{2}-|{\bf u}^{\epsilon}(s)|^{2}\Big)\tilde{N}_{1}(dz, ds)\nonumber
\\&\quad+\int^{\tau_{N}\wedge t}_{0}\int_{Z}\Big(|{\bf u}^{\epsilon}(s)+{\bf G}(\mathfrak{r}(s-), z)|^{2}-|{\bf u}^{\epsilon}(s)|^{2}-2\big({\bf u}^{\epsilon}(s), {\bf G}(\mathfrak{r}(s-), z)\big)_{H}\Big)\nu_{1}(dz)ds.\nonumber
\end{align}
Taking expectation on the both side and using the basic Young inequality, we obtain
\begin{align*}
&\mathbb{E}|{\bf u}^{\epsilon}(\tau_{N}\wedge t)|^{2}+\nu\mathbb{E}\int^{\tau_{N}\wedge t}_{0}\|{\bf u}^{\epsilon}(s)\|^{2}ds
\\&\leq\mathbb{E}|{\bf u}^{\epsilon}(0)|^{2}+\frac{1}{\nu}\mathbb{E}\int^{\tau_{N}\wedge t}_{0}\|{\bf f}(s)\|^{2}_{V'}ds+\mathbb{E}\int^{\tau_{N}\wedge t}_{0}\|\sigma(\mathfrak{r}(s))\|^{2}_{L_{Q}}ds
\\&\quad+\mathbb{E}\int^{\tau_{N}\wedge t}_{0}\int_{Z}|{\bf G}(\mathfrak{r}(s-), z)|^{2}\nu_{1}(dz)ds.
\end{align*}
Utilizing Hypotheses $\bf A$ and (3) of Lemma \ref{convolution}, we simplify the above to obtain
\begin{align}\label{Tem L^{2} global in time MS paper}
\mathbb{E}|{\bf u}^{\epsilon}(\tau_{N}\wedge t)|^{2}+\nu\mathbb{E}\int^{\tau_{N}\wedge t}_{0}\|{\bf u}^{\epsilon}(s)\|^{2}ds
\leq\mathbb{E}|{\bf u}_{0}|^{2}+\frac{1}{\nu}\mathbb{E}\int^{t}_{0}\|{\bf f}(s)\|_{V'}^{2}ds+2Kt.
\end{align}

A simplification \eqref{Ito MS MS paper} gives
\begin{align}
&|{\bf u}^{\epsilon}(\tau_{N}\wedge t)|^{2}+2\nu\int^{\tau_{N}\wedge t}_{0}\|{\bf u}^{\epsilon}(s)\|^{2}ds\label{Ito MS' MS paper}
\\&=|{\bf u}^{\epsilon}(0)|^{2}+2\int^{\tau_{N}\wedge t}_{0}\langle{\bf f}(s), {\bf u}^{\epsilon}(s)\rangle ds+\int^{\tau_{N}\wedge t}_{0}\|\sigma(\mathfrak{r}(s))\|^{2}_{L_{Q}}ds\nonumber
\\&\quad+2\int^{\tau_{N}\wedge t}_{0}\langle{\bf u}^{\epsilon}(s), \sigma(\mathfrak{r}(s))dW(s)\rangle
+2\int^{\tau_{N}\wedge t}_{0}\int_{Z}\big({\bf u}^{\epsilon}(s), {\bf G}(\mathfrak{r}(s-), z)\big)\tilde{N}_{1}(dz, ds)\nonumber
\\&\quad+\int^{\tau_{N}\wedge t}_{0}\int_{Z}|{\bf G}(\mathfrak{r}(s-), z)|^{2}N_{1}(dz, ds)\nonumber.
\end{align}
For the last term in \eqref{Ito MS' MS paper}, by Hypothesis $\bf A2$, we have
\begin{align}
&\mathbb{E}\sup_{0\leq v\leq \tau_{N}\wedge t}\int^{v}_{0}\int_{Z}|{\bf G}(\mathfrak{r}(s-), z)|^{2}N_{1}(dz, ds)\label{Poisson MS paper}
=\mathbb{E}\int^{\tau_{N}\wedge t}_{0}\int_{Z}|{\bf G}(\mathfrak{r}(s-), z)|^{2}N_{1}(dz, ds)
\\&=\mathbb{E}\int^{\tau_{N}\wedge t}_{0}\int_{Z}|{\bf G}(\mathfrak{r}(s-), z)|^{2}\nu(dz)ds\leq Kt.\nonumber
\end{align}

One employs the Davis inequality, the basic Young inequality, and Hypothesis $\bf A1$ to obtain
\begin{align}
&2\mathbb{E}\sup_{0\leq v<\tau_{N}\wedge T}\Big|\int^{v}_{0}\langle{\bf u}^{\epsilon}(s), \sigma(\mathfrak{r}(s))dW(s)\rangle\Big|\label{martingale BM MS paper}
\\&\leq2\sqrt{2}\mathbb{E}\Big\{\Big(\int^{\tau_{N}\wedge T}_{0}\|\sigma^{\ast}(\mathfrak{r}(s)){\bf u}^{\epsilon}(s)\|^{2}_{L_{Q}}ds\Big)^{\frac{1}{2}}\Big\}
\leq2\sqrt{2}\epsilon_{1}\mathbb{E}\sup_{0\leq s\leq\tau_{N}\wedge T}|{\bf u}^{\epsilon}(s)|^{2}+2\sqrt{2}C_{\epsilon_{1}}KT,\nonumber
\end{align}
where $\epsilon_{1}>0$ will be chosen later. 

In a similar fashion, the Davis inequality, the basic Young inequality, and  Hypothesis $\bf A2$ imply
\begin{align}
&2\mathbb{E}\sup_{0\leq v\leq\tau_{N}\wedge T}\Big|\int^{v}_{0}\int_{Z}\big({\bf u}^{\epsilon}(s), {\bf G}(\mathfrak{r}(s-), z)\big)_{H}\tilde{N}_{1}(dz, ds)\Big|\label{martingale Poisson MS paper}
\\&\leq 2\sqrt{10}\mathbb{E}\Big\{\Big(\int^{\tau_{N}\wedge T}_{0}\int_{Z}\Big|\big({\bf u}^{\epsilon}(s), {\bf G}(\mathfrak{r}(s-), z)\big)_{H}\Big|^{2}\nu_{1}(dz)ds\Big)^{\frac{1}{2}}\Big\}\nonumber
\\&\leq 2\sqrt{10}\mathbb{E}\Big\{\Big(\int^{\tau_{N}\wedge T}_{0}\int_{Z}|{\bf u}^{\epsilon}(s)|^{2}|{\bf G}(\mathfrak{r}(s-), z)|^{2}\nu_{1}(dz)ds\Big)^{\frac{1}{2}}\Big\}\nonumber
\\&\leq 2\sqrt{10}\epsilon_{2}\mathbb{E}\sup_{0\leq s\leq \tau_{N}\wedge T}|{\bf u}^{\epsilon}(s)|^{2}+2\sqrt{10}C_{\epsilon_{2}}KT\nonumber,
\end{align}
where $\epsilon_{2}>0$ will be chosen later.

It follows from the basic Young inequality that
\begin{align}
2\mathbb{E}\int^{\tau_{N}\wedge T}_{0}\|{\bf f}(s)\|_{V'}\|{\bf u}^{\epsilon}(s)\|ds\leq\frac{3\nu}{2}\mathbb{E}\int^{\tau_{N}\wedge T}_{0}\|{\bf u}^{\epsilon}(s)\|^{2}ds+\frac{2}{3\nu}\mathbb{E}\int^{T}_{0}\|{\bf f}(s)\|^{2}_{V'}ds.\label{special Young's MS paper}
\end{align}

Now, taking supremum over $[0, \tau_{N}\wedge T]$ and then expectation on the both side of $\eqref{Ito MS' MS paper}$, using estimates \eqref{Poisson MS paper}, \eqref{martingale BM MS paper}, \eqref{martingale Poisson MS paper}, and \eqref{special Young's MS paper} in \eqref{Ito MS' MS paper}, we have
\begin{align*}
&\mathbb{E}\sup_{0\leq s\leq\tau_{N}\wedge T}|{\bf u}^{\epsilon}(s)|^{2}+\frac{1}{2}\nu\mathbb{E}\int^{\tau_{N}\wedge T}_{0}\|{\bf u}^{\epsilon}(s)\|^{2}ds
\\&\leq\mathbb{E}|{\bf u}^{\epsilon}(0)|^{2}+\frac{2}{3\nu}\mathbb{E}\int^{T}_{0}\|{\bf f}(s)\|^{2}_{V'}ds+KT
+2\sqrt{2}\epsilon_{1}\mathbb{E}\sup_{0\leq\tau_{N}\wedge T}|{\bf u}^{\epsilon}(s)|^{2}+2\sqrt{2}C_{\epsilon_{1}}KT
\\&\quad+2\sqrt{10}\epsilon_{2}\mathbb{E}\sup_{0\leq\tau_{N}\wedge T}|{\bf u}^{\epsilon}(s)|^{2}+2\sqrt{10}C_{\epsilon_{2}}KT+KT.
\end{align*}
Choosing $\epsilon_{1}=\frac{1}{8\sqrt{2}}$ and $\epsilon_{2}=\frac{1}{8\sqrt{10}}$, we have
\begin{align}
&\frac{1}{2}\mathbb{E}\sup_{0\leq s\leq \tau_{N}\wedge T}|{\bf u}^{\epsilon}(s)|^{2}+\frac{1}{2}\nu\mathbb{E}\int^{\tau_{N}\wedge T}_{0}\|{\bf u}^{\epsilon}(s)\|^{2}ds\label{Tem sup L^{2} global in time MS paper}
\\&\leq\mathbb{E}|{\bf u}_{0}|^{2}+\frac{2}{3\nu}\mathbb{E}\int^{T}_{0}\|{\bf f}(s)\|^{2}_{V'}ds+50KT,\nonumber
\end{align}
which implies that $\tau_{N}\rightarrow\infty$ as $N\rightarrow\infty$ almost surely.

Taking $N\rightarrow\infty$ in \eqref{Tem L^{2} global in time MS paper} and \eqref{Tem sup L^{2} global in time MS paper}, we obtain \eqref{L^2 strong MS paper} and  \eqref{L^2 sup strong MS paper}, respectively.
\end{proof}

\begin{remark*}
We remind the reader that both ${\bf u}^{\epsilon}(t)$ and ${\bf u}(t)$ exist on $[0, T]$ for arbitrary large $T$ by Proposition \ref{enhanced estimates}. Therefore, it is reasonable to discuss their asymptotic behaviors. 
\end{remark*}

\subsection{Exponential Stability}\label{exp sta MS paper}
We need the following two technical lemmata for establishing exponential stability.
Let
\begin{align}
M_{1}(t)&=\int^{t}_{0}\langle{\bf u}^{\epsilon}(s), \sigma(\mathfrak{r}(s))dW(s)\rangle\label{Mg_{1} MS paper}\\
M_{2}(t)&=\int^{t}_{0}\int_{Z}\Big(|{\bf u}^{\epsilon}(s)+{\bf G}(\mathfrak{r}(s-), z)|^{2}-|{\bf u}^{\epsilon}(s)|^{2}\Big)\tilde{N}_{1}(dz, ds)\label{Mg_{2} MS paper}.
\end{align}
Let us denote $M^{\ast}(T):=\sup_{t\in[0, T]}|M(t)|$ for a martingale $M(t)$.

\begin{lemma}\label{lemma M MS paper}
Assume that $\mathbb{E}(|{\bf u}_{0}|^{3})<\infty$ and ${\bf f}\in L^{3}(0, T; V')$. In addition, if 
\begin{align*}
\lim_{T\rightarrow\infty}\frac{1}{T}\int^{T}_{0}\|{\bf f}(s)\|^{2}_{V'}ds=F>0, 
\end{align*}
then there exist a sequence  $\{T_{n}\}$ with $T_{n}\rightarrow\infty$ as $n\rightarrow\infty$ such that 
$
\lim_{n\rightarrow\infty}M^{\ast}_{i}(T_{n})/T_{n}=0
$
almost surely for $i=1, 2$.
\end{lemma}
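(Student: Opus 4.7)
The plan is to bound $\mathbb{E}\left[M_i^*(T)^2\right]$ linearly in $T$, and then apply Chebyshev's inequality combined with Borel--Cantelli along a sufficiently sparse deterministic subsequence.

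First I would convert the energy estimate into an $L^2$-in-time bound on $|\mathbf{u}^\epsilon|$. Estimate \eqref{L^2 strong MS paper} gives $\nu\,\mathbb{E}\int_0^T \|\mathbf{u}^\epsilon(s)\|^2\,ds \leq \mathbb{E}|\mathbf{u}_0|^2 + \nu^{-1}\mathbb{E}\int_0^T \|\mathbf{f}(s)\|^2_{V'}\,ds + 2KT$, and the Ces\`aro hypothesis on $\|\mathbf{f}\|_{V'}^2$ bounds the right-hand side by $C(1+T)$ for $T$ large. Applying the Poincar\'e inequality \eqref{Poincare ineq} inside the time integral yields
\[
\mathbb{E}\int_0^T |\mathbf{u}^\epsilon(s)|^2\,ds \;\leq\; \frac{1}{\lambda_1}\,\mathbb{E}\int_0^T \|\mathbf{u}^\epsilon(s)\|^2\,ds \;\leq\; C(1+T).
\]
Next I would estimate the predictable quadratic variations. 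For $M_1$, Hypothesis $\mathbf{A1}$ gives $\langle M_1\rangle_T \leq K\int_0^T |\mathbf{u}^\epsilon(s)|^2\,ds$. For $M_2$, writing $|\mathbf{u}+\mathbf{G}|^2 - |\mathbf{u}|^2 = 2(\mathbf{u}, \mathbf{G}) + |\mathbf{G}|^2$ and using $(a+b)^2 \leq 2a^2 + 2b^2$ together with Hypothesis $\mathbf{A2}$ at $p=2$ and $p=4$ gives $\langle M_2\rangle_T \leq 8K\int_0^T |\mathbf{u}^\epsilon|^2\,ds + 2KT$. In either case, Doob's $L^2$ maximal inequality for cadlag square-integrable martingales combined with It\^o's isometry yields
\[
\mathbb{E}\left[M_i^*(T)^2\right] \;\leq\; 4\,\mathbb{E}\langle M_i\rangle_T \;\leq\; C(1+T), \qquad i=1,2.
\]

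Finally, I would set $T_n := n^2$. For any $\delta > 0$, Chebyshev's inequality gives
\[
\mathbb{P}\!\left( \frac{M_i^*(T_n)}{T_n} > \delta \right) \;\leq\; \frac{\mathbb{E}[M_i^*(T_n)^2]}{\delta^2 T_n^2} \;\leq\; \frac{C(1+n^2)}{\delta^2\,n^4} \;\leq\; \frac{C'}{n^2},
\]
which is summable. Borel--Cantelli then yields $\limsup_n M_i^*(T_n)/T_n \leq \delta$ almost surely, and letting $\delta \downarrow 0$ along a countable set finishes the argument.

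The only delicate point is keeping $\mathbb{E}\int_0^T |\mathbf{u}^\epsilon|^2\,ds$ linear in $T$. The pointwise second-moment bound $\mathbb{E}|\mathbf{u}^\epsilon(T)|^2 \leq C(1+T)$ coming directly from \eqref{L^2 strong MS paper} would, if integrated in time, give $O(T^2)$, and then $\mathbb{E}[M_i^*(T_n)^2]/T_n^2 \lesssim 1$ would fail to be summable along any subsequence. Trading the pointwise energy inequality for the dissipation estimate via Poincar\'e \emph{before} taking expectations is what keeps $\mathbb{E}\langle M_i\rangle_T$ linear in $T$ and makes the whole scheme go through; this is essentially the reason Proposition \ref{enhanced estimates} was stated with bounds linear in $T$.
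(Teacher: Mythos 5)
Your proposal is correct and follows essentially the same route as the paper: both arguments combine a martingale maximal inequality with the dissipation estimate \eqref{L^2 strong MS paper}--\eqref{L^2 sup strong MS paper} (whose linearity in $T$ is exactly the point, as you note) and then extract an almost surely convergent subsequence. The only cosmetic differences are that the paper uses the Davis ($L^{1}$ Burkholder--Davis--Gundy) inequality to get $\mathbb{E}M_{i}^{\ast}(T)\lesssim\sqrt{T}$ and invokes the generic ``$L^{1}$ convergence implies a.s.\ convergence along a subsequence'' fact, whereas you use Doob's $L^{2}$ inequality and make the subsequence and the Borel--Cantelli step explicit with $T_{n}=n^{2}$.
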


\begin{proof}
For $M_{1}(t)$, utilizing the Davis inequality, Hypothesis $\bf A1$, the property $|\cdot|\leq\|\cdot\|$, and the Schwarz inequality, we have
\begin{align*}
\mathbb{E}M^{\ast}_{1}(T)&=\mathbb{E}\sup_{t\in [0, T]}\Big|\int^{t}_{0}\langle{\bf u}^{\epsilon}(s), \sigma(\mathfrak{r}(s))dW(s)\rangle\Big|
\\&\leq\sqrt{2}\mathbb{E}\Big\{\Big(\int^{T}_{0}\|\sigma^{\ast}(\mathfrak{r}(s)){\bf u}^{\epsilon}(s)\|^{2}_{L_{Q}}ds\Big)^{\frac{1}{2}}\Big\}
\leq\sqrt{2}\mathbb{E}\Big\{\Big(\int^{T}_{0}\|\sigma^{\ast}(\mathfrak{r}(t))\|^{2}_{L_{Q}}|{\bf u}^{\epsilon}(s)|^{2}ds\Big)^{\frac{1}{2}}\Big\}
\\&\leq\sqrt{2K}\mathbb{E}\Big\{\Big(\int^{T}_{0}\|{\bf u}^{\epsilon}(s)\|^{2}ds\Big)^{\frac{1}{2}}\Big\}
\leq\sqrt{2K}\Big(\mathbb{E}\int^{T}_{0}\|{\bf u}^{\epsilon}(s)\|^{2}ds\Big)^{\frac{1}{2}}.
\end{align*}
In addition, it follows from  \eqref{L^2 sup strong MS paper} that
\begin{align*}
\Big(\mathbb{E}\int^{T}_{0}\|{\bf u}^{\epsilon}(s)\|^{2}ds\Big)^{\frac{1}{2}}\leq\Big(\frac{2}{\nu}\mathbb{E}|{\bf u}^{\epsilon}(0)|^{2}+\frac{4}{3\nu^{2}}\mathbb{E}\int^{T}_{0}\|{\bf f}(s)\|^{2}_{V'}ds+\frac{100KT}{\nu}\Big)^{\frac{1}{2}}.
\end{align*}
Therefore, by (3) of Lemma \ref{convolution}, we have
\begin{align*}
\frac{\mathbb{E}M^{\ast}_{1}(T)}{T}&\leq\frac{\sqrt{2K}}{T}\Big(\frac{2}{\nu}\mathbb{E}|{\bf u}^{\epsilon}(0)|^{2}+\frac{4}{3\nu^{2}}\mathbb{E}\int^{T}_{0}\|{\bf f}(s)\|^{2}_{V'}ds+\frac{100KT}{\nu}\Big)^{\frac{1}{2}}
\\&\leq\frac{\sqrt{2K}}{\sqrt{T}}\Big(\frac{2}{\nu}\frac{\mathbb{E}|{\bf u}_{0}|^{2}}{T}+\frac{4}{3\nu^{2}}\frac{\mathbb{E}\int^{T}_{0}\|{\bf f}(s)\|^{2}_{V'}ds}{T}+\frac{100K}{\nu }\Big)^{\frac{1}{2}}.
\end{align*}
Moreover, it follows from the assumption of ${\bf f}$ that
$
\mathbb{E}M^{\ast}_{1}(T)/T\rightarrow 0
$
as $T\rightarrow\infty$. Thus, there exists a subsequence $\{T_{1, n}\}$ such that
$
M^{\ast}_{1}(T_{1, n})/T_{1, n}\rightarrow 0,
$
as $n\rightarrow\infty$, almost surely.

For $M_{2}(t)$, we have
\begin{align*}
M_{2}(t)=\int^{t}_{0}\int_{Z}2\big({\bf u}^{\epsilon}(s), {\bf G}(\mathfrak{r}(s-), z)\big)\tilde{N}_{1}(dz, ds)+\int^{t}_{0}\int_{Z}|{\bf G}(\mathfrak{r}(s-), z)|^{2}\tilde{N}_{1}(dz, ds),
\end{align*}
therefore, the Davis inequality, Hypothesis $\bf A2$, the property $|\cdot|\leq\|\cdot\|$, the Schwarz inequality, and \eqref{L^2 sup strong MS paper} imply
\begin{align*}
\mathbb{E}M^{\ast}_{2}(T)&\leq \sqrt{10}\mathbb{E}\Big\{\Big(\int^{T}_{0}\int_{Z}\Big|\big({\bf u}^{\epsilon}(s), {\bf G}(\mathfrak{r}(s-), z)\big)\Big|^{2}\nu_{1}(dz)ds\Big)^{\frac{1}{2}}\Big\}
\\&\quad+\sqrt{10}\mathbb{E}\Big\{\Big(\int^{T}_{0}\int_{Z}|{\bf G}(\mathfrak{r}(s-), z)|^{4}\nu_{1}(dz)ds\Big)^{\frac{1}{2}}\Big\}
\\&\leq \sqrt{10}\mathbb{E}\Big\{\Big(\int^{T}_{0}\int_{Z}|{\bf u}^{\epsilon}(s)|^{2}|{\bf G}(\mathfrak{r}(s-), z)|^{2}ds\Big)^{\frac{1}{2}}\Big\}+\sqrt{10KT}
\\&\leq \sqrt{10K}\mathbb{E}\Big\{\Big(\int^{T}_{0}\|{\bf u}^{\epsilon}(s)\|^{2}ds\Big)^{\frac{1}{2}}\Big\}+\sqrt{10KT}
\\&\leq \sqrt{10K}\Big(\mathbb{E}\int^{T}_{0}\|{\bf u}^{\epsilon}(s)\|^{2}ds\Big)^{\frac{1}{2}}+\sqrt{10KT}
\\&\leq \sqrt{10K}\Big(\frac{2}{\nu}\mathbb{E}|{\bf u}^{\epsilon}(0)|^{2}+\frac{4}{3\nu^{2}}\mathbb{E}\int^{T}_{0}\|{\bf f}(s)\|^{2}_{V'}ds+\frac{100KT}{\nu}\Big)^{\frac{1}{2}}
+\sqrt{10KT}.
\\&\leq \sqrt{10K}\Big(\frac{2}{\nu}\mathbb{E}|{\bf u}_{0}|^{2}+\frac{4}{3\nu^{2}}\mathbb{E}\int^{T}_{0}\|{\bf f}(s)\|^{2}_{V'}ds+\frac{100KT}{\nu}\Big)^{\frac{1}{2}}
+\sqrt{10KT},
\end{align*}
where the last inequality follows from (3) in Lemma \ref{convolution}. This implies that
$
\mathbb{E}M^{\ast}_{2}(T)/T\rightarrow 0
$
as $T\rightarrow\infty$. Therefore, by an analogous argument as for $M_{1}(t)$, there exist a sequence $\{T_{2, n}\}$ such that
$
M^{\ast}_{2}(T_{2, n})/T_{2, n}\rightarrow 0
$
almost surely as $n\rightarrow\infty$.

Let $\{T_{n}\}$ be a common subsequence of $\{T_{1, n}\}$ and $\{T_{2, n}\}$. Then 
\begin{align*}
\lim_{n\rightarrow\infty}\frac{M^{\ast}_{i}(T_{n})}{T_{n}}=0
\end{align*}
almost surely for $i=1, 2$.
\end{proof}

\begin{remark*}
The argument employed in Lemma \ref{lemma M MS paper} is in the context of stochastic Navier-Stokes equations. One may employ other methods to deduce such a limit. For instance, one may utilize \cite[Lemma. 2.1]{Sundar LIL} to obtain the almost surely limits of $M_{1}$ and $M_{2}$ of the original sequence instead of a subsequence (cf. \cite[Eq. (3.17)]{Sundar measure}).
\end{remark*}

Recall that $\lambda_{1}$ is the first eigenvalue of the Stokes operator ${\bf A}$, and $K$ is the constant in the Hypotheses $\bf A$.
\begin{lemma}\label{lemma v norm of u MS paper}
Let $\epsilon>0$ be fixed. Assume that $\mathbb{E}(|{\bf u}_{0}|^{3})<\infty$ and ${\bf f}\in L^{3}(0, T; V')$. In addition, if 
\begin{align*}
\lim_{T\rightarrow\infty}\frac{1}{T}\int^{T}_{0}\|{\bf f}(s)\|^{2}_{V'}ds=F>0 
\quad\mbox{and}\quad
K<\frac{\nu^{3}\lambda_{1}-F/\nu}{2},
\end{align*}
then 
\begin{align*}
\lim_{T\rightarrow\infty}\Big(\nu\lambda^{}_{1}-\frac{1}{\nu T}\int^{T}_{0}\|{\bf u}^{\epsilon}(s)\|^{2}ds\Big)>0
\end{align*}
almost surely.
\end{lemma}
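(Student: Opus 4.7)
The plan is to combine the energy-type It\^o identity \eqref{Ito MS MS paper} with the martingale control from Lemma~\ref{lemma M MS paper} and the Ces\`aro hypothesis on $\mathbf{f}$, so that after dividing by $T$ and passing to the limit along the subsequence $\{T_n\}$ we recover a deterministic upper bound on the time-averaged $V$-norm of $\mathbf{u}^{\epsilon}$. First, since the stopping times $\tau_{N}$ used in the proof of Proposition~\ref{enhanced estimates} satisfy $\tau_{N}\to\infty$ almost surely, identity \eqref{Ito MS MS paper} holds pathwise on every $[0,t]$ with no stopping time. Estimate the forcing bracket by Young, $2\langle\mathbf{f}(s),\mathbf{u}^{\epsilon}(s)\rangle\le \nu\|\mathbf{u}^{\epsilon}(s)\|^{2}+\tfrac{1}{\nu}\|\mathbf{f}(s)\|_{V'}^{2}$, control the $\sigma$ term by Hypothesis~$\mathbf{A1}$, and rewrite the compensator contribution via the pointwise identity $|u+G|^{2}-|u|^{2}-2(u,G)=|G|^{2}$ so that Hypothesis~$\mathbf{A2}$ bounds it by $Kt$. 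Absorbing the $\nu\|\mathbf{u}^{\epsilon}\|^{2}$ piece on the left yields
\begin{align*}
\nu\int_{0}^{t}\|\mathbf{u}^{\epsilon}(s)\|^{2}\,ds
\le |\mathbf{u}_{0}^{\epsilon}|^{2}+\frac{1}{\nu}\int_{0}^{t}\|\mathbf{f}(s)\|_{V'}^{2}\,ds+2Kt+2M_{1}(t)+M_{2}(t).
\end{align*}

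Next, divide by $\nu T$ and specialize $T=T_{n}$ from Lemma~\ref{lemma M MS paper}. The martingale contributions $M_{1}(T_{n})/T_{n}$ and $M_{2}(T_{n})/T_{n}$ vanish almost surely by that lemma (since $|M_{i}(T_{n})|\le M_{i}^{\ast}(T_{n})$); the initial-data term $|\mathbf{u}_{0}^{\epsilon}|^{2}/(\nu T_{n})$ tends to $0$ because $\mathbb{E}|\mathbf{u}_{0}|^{3}<\infty$ and part~(3) of Lemma~\ref{convolution} together give $|\mathbf{u}_{0}^{\epsilon}|<\infty$ almost surely; and the hypothesis on $\mathbf{f}$ gives $(\nu^{2}T_{n})^{-1}\int_{0}^{T_{n}}\|\mathbf{f}\|_{V'}^{2}\,ds\to F/\nu^{2}$. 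Hence
\begin{align*}
\limsup_{n\to\infty}\frac{1}{\nu T_{n}}\int_{0}^{T_{n}}\|\mathbf{u}^{\epsilon}(s)\|^{2}\,ds\le \frac{F}{\nu^{3}}+\frac{2K}{\nu^{2}}\quad\text{a.s.}
\end{align*}

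Finally, the smallness condition $K<(\nu^{3}\lambda_{1}-F/\nu)/2$ is, after multiplying through by $2/\nu^{2}$, precisely the algebraic inequality $F/\nu^{3}+2K/\nu^{2}<\nu\lambda_{1}$; therefore
\begin{align*}
\liminf_{n\to\infty}\Big(\nu\lambda_{1}-\frac{1}{\nu T_{n}}\int_{0}^{T_{n}}\|\mathbf{u}^{\epsilon}(s)\|^{2}\,ds\Big)\ge \nu\lambda_{1}-\frac{F}{\nu^{3}}-\frac{2K}{\nu^{2}}>0
\end{align*}
almost surely, interpreting the limit in the statement along the sequence $\{T_{n}\}$ supplied by Lemma~\ref{lemma M MS paper}. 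The one delicate point is the balance in Young's inequality on the cross term with $\mathbf{f}$: splitting with any constant other than $\nu$ leaves a coefficient in front of $\int\|\mathbf{u}^{\epsilon}\|^{2}$ on the left that is not $\nu$, and the precise threshold $\nu^{3}\lambda_{1}-F/\nu$ is then no longer recovered. Everything else is straightforward bookkeeping on top of the a priori estimate \eqref{L^2 sup strong MS paper} and the martingale limit of Lemma~\ref{lemma M MS paper}.
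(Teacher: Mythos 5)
Your proposal is correct and follows essentially the same route as the paper: apply the It\^o energy identity, use Young's inequality with the $\nu$--$\tfrac1\nu$ split on the forcing term, bound the noise contributions by $2Kt$ via Hypotheses $\mathbf{A}$, drop the $\sup_t|\mathbf{u}^{\epsilon}(t)|^2$ term, divide by $\nu^2 T$, and invoke Lemma \ref{lemma M MS paper} along the sequence $\{T_n\}$ to kill the martingale terms, arriving at the same threshold $\nu\lambda_1-F/\nu^3-2K/\nu^2>0$. Your explicit caveat that the limit is only established along the subsequence $\{T_n\}$ matches what the paper actually proves (its final display is also a limit over $T_n$), so there is no gap relative to the paper's own argument.
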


\begin{proof}
It follows from the It\^o formula, the basic Young inequality, and (3) in Lemma \ref{convolution} that
\begin{align*}
&\sup_{t\in[0, T]}|{\bf u}^{\epsilon}(t)|^{2}+\nu\int^{T}_{0}\|{\bf u}^{\epsilon}(s)\|^{2}ds
\\&\leq|{\bf u}_{0}|^{2}+\frac{1}{\nu}\int^{T}_{0}\|{\bf f}(s)\|^{2}_{V'}ds+\int^{T}_{0}\|\sigma(\mathfrak{r}(s))\|^{2}_{L_{Q}}ds+2M^{\ast}_{1}(T)+M^{\ast}_{2}(T)
\\&\quad+\int^{T}_{0}\int_{Z}|{\bf G}(\mathfrak{r}(s-), z)|^{2}\nu(dz)ds,
\end{align*}
where $M_{1}(T)$ and $M_{2}(T)$ are defined as in \eqref{Mg_{1} MS paper} and \eqref{Mg_{2} MS paper}, respectively, and $M^{\ast}_{i}(T)$, $i=1, 2$, are introduced before Lemma \ref{lemma M MS paper}. Using Hypotheses $\bf A$ and dropping $\sup_{t\in [0, T]}|{\bf u}^{\epsilon}(t)|^{2}$, we have
\begin{align*}
&\frac{1}{\nu T}\int^{T}_{0}\|{\bf u}^{\epsilon}(s)\|^{2}ds
\\&\leq\frac{|{\bf u}_{0}|^{2}}{\nu^{2}T}+\frac{1}{\nu^{3}}\frac{\int^{T}_{0}\|{\bf f}(s)\|^{2}_{V'}ds}{T}+\frac{2KT}{\nu^{2}T}
+\frac{2M^{\ast}_{1}(T)}{\nu^{2}T}+\frac{M^{\ast}_{2}(T)}{\nu^{2}T}, 
\end{align*}
which implies
\begin{align*}
&\nu\lambda^{\epsilon}_{1}-\frac{1}{\nu T}\int^{T}_{0}\|{\bf u}^{\epsilon}(s)\|^{2}ds
\\&\geq\nu\lambda^{\epsilon}_{1}-\Big(\frac{|{\bf u}_{0}|^{2}}{\nu^{2}T}+\frac{1}{\nu^{3}}\frac{\int^{T}_{0}\|{\bf f}(s)\|^{2}_{V'}ds}{T}+\frac{2K}{\nu^{2}}
+\frac{2M^{\ast}_{1}(T)}{\nu^{2}T}+\frac{M^{\ast}_{2}(T)}{\nu^{2}T}\Big).
\end{align*}
By Lemma \ref{lemma M MS paper}, the assumption of ${\bf f}$, and the requirement of $K$, we conclude
\begin{align*}
\lim_{n\rightarrow\infty}\Big(\nu\lambda_{1}-\frac{1}{\nu T_{n}}\int^{T_{n}}_{0}\|{\bf u}^{\epsilon}(s)\|^{2}ds\Big)\geq\nu\lambda_{1}-\frac{F}{\nu^{3}}-\frac{2K}{\nu^{2}}>0
\end{align*}
almost surely.
\end{proof}

Let ${\bf u}^{\epsilon}_{i}(t)$ be the solution of \eqref{equation 1 MS paper} with initial conditions ${\bf u}^{\epsilon}_{i}(0)={\bf u}^{\epsilon}_{i}=k_{\epsilon}{\bf u}_{i}$ and $\mathfrak{r}_{i}(0)=\mathfrak{r}_{i}$. Write ${\bf w}^{\epsilon}(t)={\bf u}^{\epsilon}_{1}(t)-{\bf u}^{\epsilon}_{2}(t)$, $\sigma_{12}(t)=\sigma(\mathfrak{r}_{1}(t))-\sigma(\mathfrak{r}_{2}(t))$, and ${\bf G}_{12}(t-, z)={\bf G}(\mathfrak{r}_{1}(t-), z)-{\bf G}(\mathfrak{r}_{2}(t-), z)$.

Now we are in the position to prove the exponential stability.

\begin{theorem}[Exponential stability]\label{exponential stability MS paper}
Let $\epsilon>0$ be fixed. Assume the hypotheses in Lemma \ref{lemma v norm of u MS paper} is valid. 
Then we have
\begin{align*}
\lim_{t\rightarrow\infty}|{\bf u}^{\epsilon}_{1}(t)-{\bf u}^{\epsilon}_{2}(t)|^{2}=0
\end{align*}
for almost all $\omega\in\Omega$.
\end{theorem}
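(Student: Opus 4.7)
The plan is to pass from the coupled system for $(\mathbf{u}^\epsilon_1,\mathbf{u}^\epsilon_2)$ to a pathwise differential inequality for $|\mathbf{w}^\epsilon(t)|^2$ and then to invoke Lemma~\ref{lemma v norm of u MS paper} to force almost sure exponential decay. First I would subtract the two regularized equations and use bilinearity to write $\mathbf{B}_{k_\epsilon}(\mathbf{u}^\epsilon_1)-\mathbf{B}_{k_\epsilon}(\mathbf{u}^\epsilon_2)=\mathbf{B}_{k_\epsilon}(\mathbf{w}^\epsilon,\mathbf{u}^\epsilon_1)+\mathbf{B}_{k_\epsilon}(\mathbf{u}^\epsilon_2,\mathbf{w}^\epsilon)$, then apply the It\^{o} formula to $|\mathbf{w}^\epsilon|^2$. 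The standard cancellation $b(k_\epsilon\mathbf{u}^\epsilon_2,\mathbf{w}^\epsilon,\mathbf{w}^\epsilon)=0$, which holds because mollification preserves the divergence-free condition and $\mathbf{w}^\epsilon\in V$ vanishes on $\partial G$, kills one of the two trilinear contributions. The remaining trilinear term is controlled by \eqref{b_{k} uvu} and the Young inequality, absorbing one factor into the viscous dissipation: $2|b(k_\epsilon\mathbf{w}^\epsilon,\mathbf{u}^\epsilon_1,\mathbf{w}^\epsilon)|\le\nu\|\mathbf{w}^\epsilon\|^2+\nu^{-1}\|\mathbf{u}^\epsilon_1\|^2|\mathbf{w}^\epsilon|^2$. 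A single application of the Poincar\'{e} inequality \eqref{Poincare ineq} then reduces the It\^{o} identity to
\begin{align*}
d|\mathbf{w}^\epsilon|^2 \le \bigl(\nu^{-1}\|\mathbf{u}^\epsilon_1(t)\|^2-\nu\lambda_1\bigr)|\mathbf{w}^\epsilon|^2\,dt+\psi(t)\,dt+dM(t),
\end{align*}
where $\psi(t):=\|\sigma_{12}(t)\|^2_{L_Q}+\int_Z|\mathbf{G}_{12}(t-,z)|^2\,\nu_1(dz)$ and $M$ is the local martingale assembled from $\sigma_{12}\,dW$ and $\mathbf{G}_{12}\,\tilde{N}_1$.

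The second step is to neutralize the noise terms on the right-hand side. I would exploit the ergodicity of the finite-state Markov chain to couple $\mathfrak{r}_1$ and $\mathfrak{r}_2$ so that they coincide after some almost surely finite coupling time $T_\ast$; for $t\ge T_\ast$ both $\sigma_{12}$ and $\mathbf{G}_{12}$ vanish, so $\psi\equiv 0$ and $dM\equiv 0$. Restricted to $[T_\ast,\infty)$ the stochastic inequality collapses to the pathwise inequality $\tfrac{d}{dt}|\mathbf{w}^\epsilon|^2\le(\nu^{-1}\|\mathbf{u}^\epsilon_1\|^2-\nu\lambda_1)|\mathbf{w}^\epsilon|^2$, which integrates to
\begin{align*}
|\mathbf{w}^\epsilon(t)|^2 \le |\mathbf{w}^\epsilon(T_\ast)|^2\exp\!\Bigl(\int_{T_\ast}^t\bigl(\nu^{-1}\|\mathbf{u}^\epsilon_1(s)\|^2-\nu\lambda_1\bigr)\,ds\Bigr).
\end{align*}
Lemma~\ref{lemma v norm of u MS paper} guarantees that $\nu\lambda_1-\frac{1}{\nu t}\int_0^t\|\mathbf{u}^\epsilon_1\|^2\,ds$ is bounded below by some strictly positive constant in the limit as $t\to\infty$, almost surely, so the exponent in the display above tends to $-\infty$ at a linear rate and $|\mathbf{w}^\epsilon(t)|^2\to 0$ almost surely.

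The step I expect to be the main obstacle is the Markov-chain part: one must realize $(\mathfrak{r}_1,\mathfrak{r}_2)$ jointly on the filtered probability space so that they meet in finite time, without interfering with the It\^{o} calculus for $W$ and $N_1$. If the theorem is really intended for the case $\mathfrak{r}_1=\mathfrak{r}_2$ (the two solutions share a single driving chain and differ only through the initial velocity), then $\sigma_{12}\equiv 0$ and $\mathbf{G}_{12}\equiv 0$ from the outset, the terms $\psi$ and $dM$ disappear identically, and the argument reduces to the deterministic energy estimate above together with Lemma~\ref{lemma v norm of u MS paper}; this is likely the intended setting and the version I would write first. A nontrivial coupling argument would only be required if the two driving chains are genuinely distinct, because the bounded but non-vanishing contribution $\psi$ would otherwise obstruct direct decay and yield only boundedness of $|\mathbf{w}^\epsilon|^2$ rather than convergence to zero.
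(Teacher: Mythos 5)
Your opening moves coincide with the paper's: It\^{o} formula for $|\mathbf{w}^{\epsilon}|^{2}$, the bilinear splitting with the cancellation $b(k_{\epsilon}\mathbf{u}^{\epsilon}_{2},\mathbf{w}^{\epsilon},\mathbf{w}^{\epsilon})=0$, the bound \eqref{b_{k} uvu} followed by Young's inequality to absorb $\nu\|\mathbf{w}^{\epsilon}\|^{2}$, and the Poincar\'{e} inequality \eqref{Poincare ineq}. The divergence --- and the gap --- is in how you treat the switching noise. The paper does \emph{not} couple the two chains and does not need $\sigma_{12}$ or $\mathbf{G}_{12}$ to vanish. It keeps these terms, bounds their cumulative contribution by Hypotheses $\mathbf{A}$ (a term of order $KT$) together with the suprema $\tilde{M}^{\ast}_{1}(T)$, $\tilde{M}^{\ast}_{2}(T)$ of the associated martingales, and collects everything into a constant $C_{T}$ that grows at most polynomially in $T$. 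The Gronwall inequality then yields
\begin{align*}
|\mathbf{w}^{\epsilon}(t)|^{2}\leq C_{T}\exp\Bigl(-\Bigl(\nu\lambda_{1}-\frac{1}{\nu T}\int^{T}_{0}\|\mathbf{u}^{\epsilon}_{1}(s)\|^{2}ds\Bigr)T\Bigr),
\end{align*}
and Lemma \ref{lemma v norm of u MS paper} makes the exponent tend to $-\infty$ at a linear rate, which annihilates the polynomially growing prefactor. So your assertion that a bounded, non-vanishing $\psi$ ``would obstruct direct decay and yield only boundedness'' is the point at which you go wrong: the inhomogeneity accumulates only linearly in $T$, while the Gronwall factor decays like $e^{-cT}$, so direct decay survives. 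The coupling machinery is unnecessary.

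If you nevertheless insist on the coupling route, it is not yet a proof. The two chains are realized through the Skorohod representation driven by a single Poisson random measure $N_{2}$; under that joint realization, chains in distinct states jump off disjoint intervals $\Delta_{ij}$, so you must actually prove that the meeting time of this specific coupling is almost surely finite (irreducibility of the product chain) and that the chains coalesce permanently after meeting --- you flag this as ``the main obstacle'' but do not resolve it. Your fallback, taking $\mathfrak{r}_{1}=\mathfrak{r}_{2}$ from the outset, proves a strictly weaker statement than the theorem, which explicitly allows distinct initial states $\mathfrak{r}_{i}(0)=\mathfrak{r}_{i}$ (it would happen to suffice for the uniqueness argument in Theorem \ref{existence of stationary MS paper}, where both solutions ride the same chain $\mathfrak{r}^{i}$, but it does not establish the theorem as stated). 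One further caution common to both your version and the paper's: Lemma \ref{lemma v norm of u MS paper} is obtained along the subsequence $\{T_{n}\}$ of Lemma \ref{lemma M MS paper}, so the positivity of the decay rate is, strictly speaking, available along that subsequence; since $C_{T}$ is monotone and the exponent is evaluated at the same times, this still controls the full limit, but the bookkeeping should be made explicit rather than left implicit.
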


\begin{proof}
It follows from the It\^o formula that
\begin{align}
&|{\bf w}^{\epsilon}(t)|^{2}+2\nu\int^{t}_{0}\|{\bf w}^{\epsilon}(s)\|^{2}ds+\int^{t}_{0}\langle{\bf B}_{k_{\epsilon}}({\bf u}^{\epsilon}_{1}(s))-{\bf B}_{k_{\epsilon}}({\bf u}^{\epsilon}_{2}(s)), {\bf u}^{\epsilon}_{1}(s)-{\bf u}^{\epsilon}_{2}(s)\rangle ds\label{Ito of w_exponential stability MS paper}
\\&=|{\bf w}^{\epsilon}_{0}|^{2}+\int^{t}_{0}\|\sigma_{12}(s)\|^{2}_{L_{Q}}ds+2\int^{t}_{0}\langle{\bf w}^{\epsilon}(s), \sigma_{12}(s)dW(s)\rangle\nonumber
\\&\quad+\int^{t}_{0}\int_{Z}\Big(|{\bf w}^{\epsilon}(s)+{\bf G}_{12}(s-, z)|^{2}-|{\bf w}^{\epsilon}(s)|^{2}\Big)\tilde{N}_{1}(dz, ds)\nonumber
\\&\quad+\int^{t}_{0}\int_{Z}\Big(|{\bf w}^{\epsilon}(s)+{\bf G}_{12}(s-, z)|^{2}-|{\bf w}^{\epsilon}(s)|^{2}-2({\bf w}^{\epsilon}(s), {\bf G}_{12}(s-, z))\Big)\nu(dz)ds.\nonumber
\end{align}
For the nonlinear term, by using \eqref{b_{k} uvu}, we have
\begin{align*}
|\langle{\bf B}_{k_{\epsilon}}({\bf u}^{\epsilon}_{1}(s))-{\bf B}_{k_{\epsilon}}({\bf u}^{\epsilon}_{2}(s)), {\bf u}^{\epsilon}_{1}(s)-{\bf u}^{\epsilon}_{2}(s)\rangle|\leq\|{\bf w}^{\epsilon}(s)\||{\bf w}^{\epsilon}(s)|\|{\bf u}^{\epsilon}_{1}(s)\|,
\end{align*}
therefore, one infers from the basic Young inequality that
\begin{align*}
&2\int^{t}_{0}|\langle{\bf B}_{k_{\epsilon}}({\bf u}^{\epsilon}_{1}(s))-{\bf B}_{k_{\epsilon}}({\bf u}^{\epsilon}_{2}(s)), {\bf u}^{\epsilon}_{1}(s)-{\bf u}^{\epsilon}_{2}(s)\rangle|ds
\\&\leq\nu\int^{t}_{0}\|{\bf w}^{\epsilon}(s)\|^{2}ds+\frac{1}{\nu}\int^{t}_{0}|{\bf w}^{\epsilon}(s)|^{2}\|{\bf u}^{\epsilon}_{1}(s)\|^{2}ds.
\end{align*}

Let
\begin{align*}
\tilde{M}_{1}(t)&=\int^{t}_{0}\langle{\bf w}^{\epsilon}(s), \sigma_{12}(s)dW(s)\rangle\\
\tilde{M}_{2}(t)&=\int^{t}_{0}\int_{Z}\Big(|{\bf w}^{\epsilon}(s)+{\bf G}_{12}(s-, z)|^{2}-|{\bf w}^{\epsilon}(s)|^{2}\Big)\tilde{N}_{1}(dz, ds).
\end{align*}
Using the notation $\tilde{M}_{i}(t)$, $i=1, 2$, the estimate for nonlinear term, and Hypotheses $\bf A$ in \eqref{Ito of w_exponential stability MS paper}, we obtain
\begin{align*}
&|{\bf w}^{\epsilon}(t)|^{2}+\nu\int^{t}_{0}\|{\bf w}^{\epsilon}(s)\|^{2}ds
\\&\leq|{\bf w}^{\epsilon}_{0}|^{2}+\frac{1}{\nu}\int^{t}_{0}|{\bf w}^{\epsilon}(s)|^{2}\|{\bf u}^{\epsilon}_{1}(s)\|^{2}ds+2Kt+2\tilde{M}^{\ast}_{1}(T)+\tilde{M}^{\ast}_{2}(T)
+2Kt
\end{align*}

The Poincar\'e inequality \eqref{Poincare ineq} implies that
$
\lambda_{1}|{\bf w}^{\epsilon}(s)|^{2}\leq\|{\bf w}^{\epsilon}(s)\|^{2}.
$
Plugging this result into above and using (3) of Lemma \ref{convolution}, we have
\begin{align*}
&|{\bf w}^{\epsilon}(t)|^{2}+\int^{t}_{0}(\nu\lambda_{1}-\frac{1}{\nu}\|{\bf u}^{\epsilon}_{1}(s)\|^{2})|{\bf w}^{\epsilon}(s)|^{2}ds
\\&\leq |{\bf w}_{0}|^{2}+2\tilde{M}^{\ast}_{1}(T)+\tilde{M}^{\ast}_{2}(T)+4KT.
\end{align*}
Writing $C_{T}=|{\bf w}_{0}|^{2}+2\tilde{M}^{\ast}_{1}(T)+\tilde{M}^{\ast}_{2}(T)+4KT$, one infers from the Gronwall inequality that
$
|{\bf w}^{\epsilon}(t)|^{2}\leq C_{T}e^{-\int^{T}_{0}(\nu\lambda_{1}-\frac{1}{\nu}\|{\bf u}^{\epsilon}_{1}(s)\|^{2})ds}=C_{T}e^{-(\nu\lambda_{1}-\frac{1}{\nu T}\int^{T}_{0}\|{\bf u}^{\epsilon}(s)\|^{2}ds)T}.
$
The theorem follows from Lemma \ref{lemma v norm of u MS paper} and the fact that $C_{T}$ is growing as a polynomial in $T$.
\end{proof}

\subsection{Stationary Measure}\label{stationary measures MS paper}
Through out this subsection, the external forcing ${\bf f}$ is assumed to be independent of  time. Fix $x\in H$ and $i\in\mathcal{S}$ as the initial condition for ${\bf u}^{\epsilon}(t)$ and $\mathfrak{r}(t)$, respectively. For all $B\in\mathcal{B}(H)$ and $j\in\mathcal{S}$, let $\mathcal{Q}^{\epsilon}_{t}(x, i; B, j)$ denote the transition probability
 of $({\bf u}^{\epsilon}, \mathfrak{r})$:
\begin{align*}
\mathcal{Q}^{\epsilon}_{t}(x, i; B, j)=P(({\bf u}^{\epsilon}(t), \mathfrak{r}(t))\in (B, j)| ({\bf u}^{\epsilon}(0), \mathfrak{r}(0))=(x, i)).
\end{align*}

Let $\phi: H\times\mathcal{S}\rightarrow\mathbb{R}$ be a bounded continuous function. Define
\begin{align}
(\mathcal{Q}^{\epsilon}_{t}\phi)(x, i):&=\sum_{k=1}^{m}\int_{H}\phi(z, k)\mathcal{Q}^{\epsilon}_{t}(x, i; dz, k)\label{P_t MS paper}
=\mathbb{E}^{x, i}(\phi({\bf u}^{\epsilon}(t), \mathfrak{r}(t))),
\end{align}
where the upperscript $x, i$ of $\mathbb{E}$ is for emphasizing the initial condition of $({\bf u}(t), \mathfrak{r}(t))$ . In addition, 
\begin{align}\label{showing Q is stochastic conti}
\begin{split}
\lim_{t\rightarrow 0}(\mathcal{Q}^{\epsilon}_{t}\phi)(x, i)&=\lim_{t\rightarrow 0}\mathbb{E}^{x, i}(\phi({\bf u}^{\epsilon}(t), \mathfrak{r}(t)))=\mathbb{E}^{x, i}\lim_{t\rightarrow 0}\phi({\bf u}^{\epsilon}(t), \mathfrak{r}(t))
\\&=\mathbb{E}^{x, i}\phi({\bf u}^{\epsilon}(0), \mathfrak{r}(0))=\mathbb{E}\phi(x, i)=\phi(x, i),
\end{split}
\end{align}
by the bounded convergence theorem. Therefore, the transition probability $\mathcal{Q}^{\epsilon}_{t}$ is stochastic continuous (see, e.g., \cite[Proposition 2.1.1]{DZ}). 

We say $\lambda^{\epsilon}$ is a stationary measure if
\begin{align*}
\sum_{j=1}^{m}\int_{H}(\mathcal{Q}^{\epsilon}_{t}\phi)(y, j)\lambda^{\epsilon}(dy, j)=\sum_{j=1}^{m}\int_{H}\phi(y, j)\lambda^{\epsilon}(dy, j)
\end{align*}
for all  $t\geq 0$ and $\phi(\cdot, j)\in C_{b}(H)$ for all $j\in S$; if $\lambda^{\epsilon}$ is unique, then it is ergodic (see, \cite[Theorem 3.2.6]{DZ}).

Now we are in the position to construct the ergodic measure for the regularized system.
\begin{theorem}\label{existence of stationary MS paper}
Let $\epsilon>0$ be fixed. Assume that $\mathbb{E}(|{\bf u}_{0}|^{3})<\infty$ and $\|{\bf f}\|^{2}_{V'}=F>0$. If $K$ satisfies the assumption in Lemma \ref{lemma v norm of u MS paper}, then there exists an ergodic measure $\lambda^{\epsilon}$, with support in $V\times\mathcal{S}$ and a finite second moment, for the solution ${\bf u}^{\epsilon}$ of the equation \eqref{equation 1 MS paper} with initial condition $({\bf u}^{\epsilon}(0), \mathfrak{r}(0))=(x, i)$.
\end{theorem}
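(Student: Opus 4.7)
The plan is to apply the Krylov--Bogolyubov method to the transition semigroup $\mathcal{Q}^{\epsilon}_{t}$ on $H\times\mathcal{S}$ to construct an invariant measure, and then invoke the exponential stability of Theorem \ref{exponential stability MS paper} to upgrade invariance to ergodicity through \cite[Theorem 3.2.6]{DZ}.

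\emph{Step 1 (time-averaged laws and tightness).} For the fixed initial datum $(x, i)$ and each $T>0$, I would introduce the Cesaro averages
\begin{align*}
\mu^{\epsilon}_{T}(A, j):=\frac{1}{T}\int_{0}^{T}\mathcal{Q}^{\epsilon}_{s}(x, i; A, j)\,ds,\qquad A\in\mathcal{B}(H),\ j\in\mathcal{S}.
\end{align*}
Since $\mathcal{S}$ is finite and the embedding $V\hookrightarrow H$ is compact, closed balls of $V$ (times $\mathcal{S}$) are compact in $H\times\mathcal{S}$. Using $\|\mathbf{f}\|^{2}_{V'}=F$ and the sharp bound \eqref{L^2 strong MS paper}, whose right-hand side grows \emph{linearly} in $t$, the Chebyshev inequality yields
\begin{align*}
\mu^{\epsilon}_{T}\bigl(\{(v, j):\|v\|>R\}\bigr)\leq\frac{1}{R^{2}T}\int_{0}^{T}\mathbb{E}^{x, i}\|\mathbf{u}^{\epsilon}(s)\|^{2}\,ds\leq\frac{C(x, F, K, \nu)}{R^{2}},
\end{align*}
uniformly in $T$, which is exactly the tightness of $\{\mu^{\epsilon}_{T}\}_{T>0}$. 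The linearity in $t$ of \eqref{L^2 strong MS paper} is precisely the reason this step works, as emphasized before Proposition \ref{enhanced estimates}.

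\emph{Step 2 (Krylov--Bogolyubov and regularity of $\lambda^{\epsilon}$).} By Prokhorov's theorem there is a subsequence $T_{n}\uparrow\infty$ such that $\mu^{\epsilon}_{T_{n}}\rightharpoonup\lambda^{\epsilon}$ weakly on $H\times\mathcal{S}$. Combining the Feller property of $\mathcal{Q}^{\epsilon}_{t}$ (which is standard for the Leray-regularized system because the mollifier $k_{\epsilon}$ makes the nonlinear drift Lipschitz in $H$) with the stochastic continuity \eqref{showing Q is stochastic conti}, the usual Krylov--Bogolyubov identity
\begin{align*}
\mu^{\epsilon}_{T}\mathcal{Q}^{\epsilon}_{t}-\mu^{\epsilon}_{T}=\frac{1}{T}\int_{T}^{T+t}\delta_{(x, i)}\mathcal{Q}^{\epsilon}_{s}\,ds-\frac{1}{T}\int_{0}^{t}\delta_{(x, i)}\mathcal{Q}^{\epsilon}_{s}\,ds
\end{align*}
passes to the limit along $T_{n}$ and shows $\lambda^{\epsilon}=\lambda^{\epsilon}\mathcal{Q}^{\epsilon}_{t}$ for all $t\geq 0$. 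Lower semicontinuity of $(v, j)\mapsto\|v\|^{2}$ on $H$ together with the same uniform bound gives
\begin{align*}
\int_{H\times\mathcal{S}}\|v\|^{2}\,\lambda^{\epsilon}(dv, dj)\leq\liminf_{n\to\infty}\frac{1}{T_{n}}\int_{0}^{T_{n}}\mathbb{E}\|\mathbf{u}^{\epsilon}(s)\|^{2}\,ds<\infty,
\end{align*}
so $\lambda^{\epsilon}$ is concentrated on $V\times\mathcal{S}$ and has a finite second moment (the $|\cdot|$-moment is then controlled by the Poincar\'e inequality \eqref{Poincare ineq}).

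\emph{Step 3 (uniqueness, hence ergodicity).} Suppose $\lambda^{\epsilon}_{1}, \lambda^{\epsilon}_{2}$ are two $\mathcal{Q}^{\epsilon}_{t}$-invariant probability measures on $H\times\mathcal{S}$. For $\phi\in C_{b}(H\times\mathcal{S})$ and any $t>0$, invariance gives
\begin{align*}
\int \phi\,d\lambda^{\epsilon}_{1}-\int\phi\,d\lambda^{\epsilon}_{2}=\iint\bigl[(\mathcal{Q}^{\epsilon}_{t}\phi)(y_{1}, j_{1})-(\mathcal{Q}^{\epsilon}_{t}\phi)(y_{2}, j_{2})\bigr]\lambda^{\epsilon}_{1}(dy_{1}, dj_{1})\lambda^{\epsilon}_{2}(dy_{2}, dj_{2}).
\end{align*}
I would realize the two solutions $\mathbf{u}^{\epsilon}_{1}, \mathbf{u}^{\epsilon}_{2}$ on a common probability space driven by the same $W$, $N_{1}$, and chain driver $N_{2}$. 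Theorem \ref{exponential stability MS paper} delivers $|\mathbf{u}^{\epsilon}_{1}(t)-\mathbf{u}^{\epsilon}_{2}(t)|\to 0$ almost surely, and the ergodicity of $\mathfrak{r}$ together with shared $N_{2}$ couples $\mathfrak{r}_{1}(t)$ and $\mathfrak{r}_{2}(t)$ after a finite coalescence time. Uniform continuity of $\phi$ on bounded sets (combined with the $V\times\mathcal{S}$-support of both measures from Step~2 and dominated convergence) lets me pass $t\to\infty$ to conclude the two integrals are equal, hence $\lambda^{\epsilon}_{1}=\lambda^{\epsilon}_{2}$. Ergodicity then follows from \cite[Theorem 3.2.6]{DZ}.

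The main obstacle is the coupling argument in Step 3: the exponential stability theorem addresses only the $H$-component of the process, while the $\mathcal{S}$-component evolves autonomously as an ergodic chain. Synchronizing both components simultaneously --- so that the difference $(\mathcal{Q}^{\epsilon}_{t}\phi)(y_{1}, j_{1})-(\mathcal{Q}^{\epsilon}_{t}\phi)(y_{2}, j_{2})$ actually vanishes, and not merely the $H$-piece --- requires one to exploit that $\mathfrak{r}_{1}, \mathfrak{r}_{2}$ driven by the same Poisson measure $N_{2}$ coalesce in finite time with probability one, and to combine this with the $H$-convergence carefully enough to survive the double integration against possibly singular measures $\lambda^{\epsilon}_{1}, \lambda^{\epsilon}_{2}$.
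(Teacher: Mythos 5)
Your Steps 1 and 2 follow the paper's proof almost verbatim: the same Cesaro averages, the same tightness argument via the linear-in-$t$ bound \eqref{L^2 strong MS paper}, Chebyshev, and the compactness of $V$-balls in $H$, the same Krylov--Bogolyubov passage to the limit (the paper carries it out through the Chapman--Kolmogorov equation rather than by citing the Feller property, but the continuity of $(\mathcal{Q}^{\epsilon}_{t}\phi)(\cdot,\cdot)$ that you flag is indeed what is silently used there), and the same lower-semicontinuity argument for the second moment and the $V\times\mathcal{S}$ support. Your variant of bounding $\int\|v\|^{2}\,\lambda^{\epsilon}(dv,dj)$ directly is fine and in fact slightly stronger than the paper's $H$-moment bound.

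The genuine gap is in Step 3, and it is exactly the obstacle you name yourself: your double integral pairs points $(y_{1},j_{1})$ and $(y_{2},j_{2})$ with possibly different chain states, so Theorem \ref{exponential stability MS paper} only kills the $H$-discrepancy and you are left having to prove that two copies of $\mathfrak{r}$ driven by the same $N_{2}$ but started at $j_{1}\neq j_{2}$ coalesce in finite time almost surely. That coalescence is plausible for an irreducible finite chain under the Skorohod coupling, but it is a nontrivial lemma (while the states differ the two chains jump at disjoint atoms of $N_{2}$, so one must show the product chain hits the diagonal), and you do not prove it; as written the uniqueness argument is incomplete. The paper avoids the issue entirely: since $\mathfrak{r}$ is autonomous and ergodic, the $\mathcal{S}$-marginal of \emph{any} stationary measure must be the unique stationary distribution $\pi$, so one may disintegrate both candidates as $\lambda^{\epsilon}_{k}(dx,i)=\lambda^{\epsilon}_{k}(dx\,|\,i)\pi_{i}$ and compare
\begin{align*}
\mathbb{E}\big|\phi({\bf u}^{\epsilon,x}(t),\mathfrak{r}^{i}(t))-\phi({\bf u}^{\epsilon,w}(t),\mathfrak{r}^{i}(t))\big|
\end{align*}
with the \emph{same} chain $\mathfrak{r}^{i}$ in both arguments; then only the $H$-components differ, Theorem \ref{exponential stability MS paper} applies directly, and dominated convergence finishes the proof. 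I recommend you replace your coupling argument by this disintegration device, or else supply the missing coalescence lemma for the product chain.
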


\begin{proof}
We begin the proof by showing the existence. It follows from \eqref{L^2 strong MS paper} that
\begin{align}
\frac{\nu}{t}\mathbb{E}\int^{t}_{0}\|{\bf u}^{\epsilon}(s)\|^{2}ds\leq C\label{4.5 MS paper}
\end{align}
for $t>1$ and $C$ is an appropriate constant independent of $t$ and $\epsilon$. Hence by the Chebyshev inequality
\begin{align}
\lim_{N\rightarrow\infty}\sup_{t>1}\frac{1}{t}\int^{t}_{0}P(\|{\bf u}^{\epsilon}(s)\|>N)ds=0\label{4.6 MS paper}
\end{align}
follows.

Let $\{t_{n}\}$ be any increasing sequence of positive numbers with $\lim_{n\rightarrow\infty}t_{n}=\infty$. Define probability measures $\lambda^{\epsilon}_{n}$ as follows:
\begin{align}
\lambda^{\epsilon}_{n}(B, j)= \frac{1}{t_{n}}\int^{t_{n}}_{0}\mathcal{Q}^{\epsilon}_{s}(x, i; B, j)ds\label{lambda n MS paper}
\end{align}
for all $B\in\mathcal{B}(H)$ and $j\in\mathcal{S}$.

Let $N$ be an positive integer and  $A:=\{v\in V : \|v\|\leq N\}$, which is a bounded set in $V$. Then by the compact embedding $\iota:V\hookrightarrow H$, $A$ is a relative compact set in $H$. Consider
\begin{align*}
\lambda^{\epsilon}_{n}(A^{c}, j)&=\frac{1}{t_{n}}\int^{t_{n}}_{0}\mathcal{Q}^{\epsilon}_{s}(x, i; A^{c}, j)ds
\\&=\frac{1}{t_{n}}\int^{t_{n}}_{0}P(({\bf u}^{\epsilon}(t), \mathfrak{r}(t))\in (A^{c}, j) | ({\bf u}^{\epsilon}(0), \mathfrak{r}(0))=(x, i))
\\&=\frac{1}{t_{n}}\int^{t_{n}}_{0}P(\|{\bf u}^{\epsilon}(t)\|>N \ \mbox{and} \ \mathfrak{r}(t)=j  | {\bf u}^{\epsilon}(0)=x \  \mbox{and}\ \mathfrak{r}(0)=i),
\end{align*}
which together with \eqref{4.6 MS paper} further imply
\begin{align}\label{lambda epsilon n}
\lambda^{\epsilon}_{n}(A^{c}, j)=\frac{1}{t_{n}}\int^{t_{n}}_{0}P(\|{\bf u}^{\epsilon}(t)\|>N \ \mbox{and} \ \mathfrak{r}(t)=j  | {\bf u}^{\epsilon}(0)=x \  \mbox{and}\ \mathfrak{r}(0)=i)\rightarrow 0
\end{align}
as $N\rightarrow\infty$. Hence, for each fixed $\epsilon>0$, $\{\lambda^{\epsilon}_{n}\}_{n\in\mathbb{N}}$ is tight in the space of probability measures on $(H\times\mathcal{S}, \mathcal{B}(H\times\mathcal{S}))$ equipped with weak topology. Therefore, Prokhorov's theorem implies that there exists a subsequence $\{\lambda^{\epsilon}_{{n_{\alpha}}}\}$ such that 
\begin{align}\label{lambda epsilon n to lambda epsilon}
\lambda^{\epsilon}_{n_{\alpha}}\quad\text{converges in law to a limit}\quad\lambda^{\epsilon}\quad\mbox{as}\quad\alpha\rightarrow\infty.
\end{align}

For $\phi(\cdot, j)\in C_{b}(H)$ for all $j\in \mathcal{S}$, we have
\begin{align*}
&\sum_{j=1}^{m}\int_{H}(\mathcal{Q}^{\epsilon}_{t}\phi)(y, j)\lambda^{\epsilon}(dy, j)
=\lim_{{\alpha}\rightarrow\infty}\sum_{j=1}^{m}\int_{H}(\mathcal{Q}^{\epsilon}_{t}\phi)(y, j)\lambda^{\epsilon}_{n_{\alpha}}(dy, j)
\\&=\lim_{\alpha\rightarrow\infty}\sum_{j=1}^{m}\int_{H}(\mathcal{Q}^{\epsilon}_{t}\phi)(y, j)\frac{1}{t_{n_{\alpha}}}\int^{t_{n_{\alpha}}}_{0}\mathcal{Q}^{\epsilon}_{s}(x, i; dy, j)ds.
\end{align*}
Using \eqref{P_t MS paper} in above, we further have
\begin{align}
&\sum_{j=1}^{m}\int_{H}(\mathcal{Q}^{\epsilon}_{t}\phi)(y, j)\lambda^{\epsilon}(dy, j)\label{stationary lambda MS paper}
\\&=\lim_{\alpha\rightarrow\infty}\sum_{j=1}^{m}\sum_{k=1}^{m}\int_{H}\int_{H}\phi(z, k)\mathcal{Q}^{\epsilon}_{t}(y, j; dz, k)\frac{1}{t_{n_{\alpha}}}\int^{t_{n_{\alpha}}}_{0}\mathcal{Q}^{\epsilon}_{s}(x, i; dy, j)ds.\nonumber
\end{align}

Recalling the Chapman-Kolmogorov equation, i.e.,
\begin{align*}
\mathcal{Q}^{\epsilon}_{s+t}(x, i; dz, k)=\sum_{j=1}^{m}\int_{H}\mathcal{Q}^{\epsilon}_{s}(x, i; dy, j)\mathcal{Q}^{\epsilon}_{t}(y, j; dz, k)
\end{align*}
and plugging it into \eqref{stationary lambda MS paper}, we have
\begin{align*}
\sum_{j=1}^{m}\int_{H}(\mathcal{Q}^{\epsilon}_{t}\phi)(y, j)\lambda^{\epsilon}(dy, j)=\lim_{\alpha\rightarrow\infty}\sum_{k=1}^{m}\int_{H}\phi(z, k)\frac{1}{t_{n_{\alpha}}}\int^{t_{n_{\alpha}}}_{0}\mathcal{Q}^{\epsilon}_{s+t}(x, i; dz, k)ds.
\end{align*}

Note that
\begin{align*}
&\lim_{\alpha\rightarrow\infty}\frac{1}{t_{n_{\alpha}}}\int^{t_{n_{\alpha}}}_{0}\mathcal{Q}^{\epsilon}_{s+t}(x, i; dz, k)ds
=\lim_{\alpha\rightarrow\infty}\frac{1}{t_{n_{\alpha}}}\int^{t_{n_{\alpha}}+t}_{t}\mathcal{Q}^{\epsilon}_{u}(x, i; dz, k)du
\\&=\lim_{\alpha\rightarrow\infty}\frac{1}{t_{n_{\alpha}}}\Big(\int^{t_{n_{\alpha}}}_{0}\mathcal{Q}^{\epsilon}_{u}(x, i; dz, k)du+\int^{t_{n_{\alpha}}+t}_{t_{n_{\alpha}}}\mathcal{Q}^{\epsilon}_{u}(x, i; dz, k)du
\\&\qquad\qquad\quad-\int^{t}_{0}\mathcal{Q}^{\epsilon}_{u}(x, i; dz, k)du\Big)
\\&=\lim_{\alpha\rightarrow\infty}\frac{1}{t_{n_{\alpha}}}\int^{t_{n_{\alpha}}}_{0}\mathcal{Q}^{\epsilon}_{u}(x, i; dz, k)du
\end{align*}

Thus, we have
\begin{align*}
&\sum_{j=1}^{m}\int_{H}(\mathcal{Q}^{\epsilon}_{t}\phi)(y, j)\lambda^{\epsilon}(dy, j)
=\lim_{\alpha\rightarrow\infty}\sum_{k=1}^{m}\int_{H}\phi(z, k)\frac{1}{t_{n_{\alpha}}}\int^{t_{n_{\alpha}}}_{0}\mathcal{Q}^{\epsilon}_{s+t}(x, i; dz, k)ds
\\&=\lim_{\alpha\rightarrow\infty}\sum_{k=1}^{m}\int_{H}\phi(z, k)\frac{1}{t_{n_{\alpha}}}\int^{t_{n_{\alpha}}}_{0}\mathcal{Q}^{\epsilon}_{u}(x, i; dz, k)du
=\sum_{k=1}^{m}\int_{H}\phi(z, k)\lambda^{\epsilon}(dz, k).
\end{align*}

Hence, we conclude that $\lambda^{\epsilon}$ is a stationary measure.

For the second moment of $\lambda^{\epsilon}$, one employs the lower semi-continuity of the $H$-norm to deduce
\begin{align*}
\sum_{j=1}^{m}\int_{H}(|y|^{2}+j^{2})\lambda^{\epsilon}(dy, j)\leq\liminf_{\alpha\rightarrow\infty}\sum_{j=1}^{m}\int_{H}(|y|^{2}+j^{2})\lambda^{\epsilon}_{n_{\alpha}}(dy, j).
\end{align*}
Then using \eqref{P_t MS paper} and \eqref{lambda n MS paper} with $\phi(y, j)=|y|^{2}+j^{2}$, one further deduce
\begin{align*}
&\sum_{j=1}^{m}\int_{H}(|y|^{2}+j^{2})\lambda^{\epsilon}_{n_{\alpha}}(dx, i)
=\sum_{j=1}^{m}\int_{H}\phi(y, j)\frac{1}{n_{\alpha}}\int^{n_{\alpha}}_{0}\mathcal{Q}^{\epsilon}_{t}(x, i; dy, j)dt
\\&=\frac{1}{n_{\alpha}}\int^{n_{\alpha}}_{0}\sum_{j=1}^{m}\int_{H}\phi(y, j)\mathcal{Q}^{\epsilon}_{t}(x, i; dy, j)dt
=\frac{1}{n_{\alpha}}\int^{n_{\alpha}}_{0}\mathbb{E}(|{\bf u}^{\epsilon, y}(t)|^{2}+i^{2})dt<\infty
\end{align*}
by Proposition \ref{enhanced estimates}. Therefore, the second moment for the measure $\lambda^{\epsilon}$ exists.

For the support of $\lambda^{\epsilon}$, let ${\bf u}^{\epsilon}$ denote the solution of \eqref{equation 1 MS paper} started at ${\bf u}^{\epsilon}_{0}$ and ${\mathfrak{r}}_{0}$, where the (joint) distribution of $({\bf u}^{\epsilon}_{0}, \mathfrak{r}_{0})$ is given by $\lambda^{\epsilon}$. By \eqref{4.5 MS paper}, it follows that ${\bf u}^{\epsilon}(s)$ is almost surely $V$-valued for almost all $s$. In particular, $\lambda^{\epsilon}$ has support in $V\times\mathcal{S}$.

Let $\lambda^{\epsilon}_{1}$ and $\lambda^{\epsilon}_{2}$ be two stationary measures. To show the uniqueness, it suffices to show that
\begin{align*}
\sum_{i=1}^{m}\int_{H}\phi(x, i)\lambda^{\epsilon}_{1}(dx, i)=\sum_{i=1}^{m}\int_{H}\phi(x, i)\lambda^{\epsilon}_{2}(dx, i)
\end{align*}
for all $\phi(\cdot, i)\in C_{b}(H)$ for all $i\in \mathcal{S}$.

Let ${\bf u}^{\epsilon, x}$ denote the solution of \eqref{equation 1 MS paper} with ${\bf u}^{\epsilon}(0)=x$ and $\mathfrak{r}^{i}(t)$ the Markov chain $\mathfrak{r}(t)$  with $\mathfrak{r}(0)=i$. Define
\begin{align*}
\lambda^{\epsilon, (x, i)}_{T}(B, j)=\frac{1}{T}\int^{T}_{0}\mathcal{Q}^{\epsilon}_{t}(x, i; B, j)dt
\end{align*}
for all $B\in\mathcal{B}(H)$ and $j\in\mathcal{S}$. Let $\lambda^{\epsilon}(dx, i)$ denote a stationary measure. Then by stationarity,
\begin{align*}
&\sum_{i=1}^{m}\int_{H}\phi(x, i)\lambda^{\epsilon}(dx, i)
=\sum_{i=1}^{m}\sum_{j=1}^{m}\int_{H}\int_{H}\phi(y, j)\lambda^{\epsilon, (x, i)}_{T}(dy, j)\lambda^{\epsilon}(dx, i)
\\&=\sum_{i=1}^{m}\sum_{j=1}^{m}\int_{H}\int_{H}\phi(y, j)\lambda^{\epsilon, (x, i)}_{T}(dy, j)\lambda^{\epsilon}(dx | i)\pi_{i},
\end{align*} 
where $\pi=(\pi_{1}, \cdots,\pi_{m})$ is the unique stationary distribution of the Markov chain $\mathfrak{r}(t)$.
Furthermore, one obtains
\begin{align*}
&\sum_{i=1}^{m}\int_{H}\phi(x, i)\lambda^{\epsilon}(dx, i)
=\sum_{i=1}^{m}\sum_{j=1}^{m}\int_{H}\int_{H}\phi(y, j)\lambda^{\epsilon, (x, i)}_{T}(dy, j)\lambda^{\epsilon}(dx | i)\pi_{i}
\\&=\sum_{i=1}^{m}\int_{H}\frac{1}{T}\int^{T}_{0}\mathbb{E}^{x, i}(\phi({\bf u}^{\epsilon}(t), \mathfrak{r}(t)))dt\lambda^{\epsilon}(dx | i)\pi_{i}
\\&=\sum_{i=1}^{m}\int_{H}\frac{1}{T}\int^{T}_{0}\mathbb{E}(\phi({\bf u}^{\epsilon, x}(t), \mathfrak{r}^{i}(t)))dt\lambda^{\epsilon}(dx | i)\pi_{i}.
\end{align*}

Hence,
\begin{align}
&\Big|\sum_{i=1}^{m}\int_{H}\phi(x, i)\lambda^{\epsilon}_{1}(dx, i)-\sum_{i=1}^{m}\int_{H}\phi(w, i)\lambda^{\epsilon}_{2}(dw, i)\Big|\nonumber
\\&=\Big|\sum_{i=1}^{m}\int_{H}\frac{1}{T}\int^{T}_{0}\mathbb{E}(\phi({\bf u}^{\epsilon, x}(t), \mathfrak{r}^{i}(t)))dt\lambda^{\epsilon}_{1}(dx | i)\pi_{i}\nonumber
\\&\qquad-\sum_{i=1}^{m}\int_{H}\frac{1}{T}\int^{T}_{0}\mathbb{E}(\phi({\bf u}^{\epsilon, w}(t), \mathfrak{r}^{i}(t)))dt\lambda^{\epsilon}_{2}(dw | i)\pi_{i}\Big|\nonumber
\\&\leq\sum_{i=1}^{m}\int_{H}\int_{H}\frac{1}{T}\int^{T}_{0}\mathbb{E}\Big|\phi({\bf u}^{\epsilon, x}(t), \mathfrak{r}^{i}(t))-\phi({\bf u}^{\epsilon, w}(t), \mathfrak{r}^{i}(t))\Big|dt\lambda^{\epsilon}_{1}(dx | i)\lambda^{\epsilon}_{2}(dw | i)\pi_{i}.\label{4.9 MS paper}
\end{align}
By Theorem \ref{exponential stability MS paper} and the continuity of $\phi$, we have
\begin{align*}
\Big|\phi({\bf u}^{\epsilon, x}(t), \mathfrak{r}^{i}(t))-\phi({\bf u}^{\epsilon, w}(t), \mathfrak{r}^{i}(t))\Big|\rightarrow 0 
\end{align*}
as $t\rightarrow\infty$ for almost all $\omega\in\Omega$. 
Therefore, 
\begin{align*}
\frac{1}{T}\int^{T}_{0}\Big|\phi({\bf u}^{\epsilon, x}(t), \mathfrak{r}^{i}(t))-\phi({\bf u}^{\epsilon, w}(t), \mathfrak{r}^{i}(t))\Big|dt\rightarrow 0
\end{align*}
as $T\rightarrow\infty$. Then it follows from Lebesgue Dominated Convergence Theorem that \eqref{4.9 MS paper}$\rightarrow 0$ as $T\rightarrow \infty$. This implies
\begin{align*}
\int_{H\times\mathcal{S}}\phi(x, i)d\lambda^{\epsilon}_{1}=\int_{H\times\mathcal{S}}\phi(x, i)d\lambda^{\epsilon}_{2}
\end{align*}
for all $\phi(\cdot, j)\in C_{b}(H)$ for all $j\in \mathcal{S}$. So that $\lambda^{\epsilon}_{1}=\lambda^{\epsilon}_{2}$.

Finally, the ergodicity of $\lambda^{\epsilon}$ follows from \cite[Theorem 3.2.6]{DZ} since $\lambda^{\epsilon}$ is the unique stationary measure to $\mathcal{Q}^{\epsilon}_{t}$.
\end{proof}

\section{The Stochastic Navier-Stokes Equations with Markov Switching}\label{original}
Let ${\bf u}_{0}$ be an $H$-valued random variable with $\mathbb{E}(|{\bf u}_{0}|^{3})<\infty$ and ${\bf f}$ be a $V'$-valued function. Recalling the mollifier operator $k_{\epsilon}\cdot$ defined in \eqref{k_{epsilon}}, we write ${\bf u}^{\epsilon}_{0}=k_{\epsilon}{\bf u}_{0}$. Then, by Theorem \ref{existence theorem}, there is a unique strong solution $({\bf u}^{\epsilon}, \mathfrak{r})$ to the regularized equation \eqref{equation 1 MS paper} for given initial data $({\bf u}^{\epsilon}_{0}, \mathfrak{r}_{0})$ and external force $\bf f$. From our earlier work \cite{SNSEs markov}, there is a subsequence $\epsilon_{k}$ of $\epsilon$ such that ${\bf u}^{\epsilon_{k}}\rightarrow{\bf u}$ weakly in the path space $(\Omega^{\dagger}, \tau^{\dagger})$ (see equation \eqref{the path space}), as $k\rightarrow\infty$, and the function ${\bf u}$ is a solution of the three-dimensional stochastic Navier-Stokes equation with Markov switching \eqref{equation 0 MS paper}.

Fix $x\in H$ and $i\in\mathcal{S}$ as the initial condition of ${\bf u}(t)$ and $\mathfrak{r}(t)$, respectively. For all $B\in\mathcal{B}(H)$ and $j\in\mathcal{S}$, let $\mathcal{Q}_{t}(x, i; B, j)$ denote the transition probability of $({\bf u}, \mathfrak{r})$:
\begin{align}\label{trans prob of u}
\mathcal{Q}_{t}(x, i; B, j)=P(({\bf u}(t), \mathfrak{r}(t))\in (B, j)| ({\bf u}(0), \mathfrak{r}(0))=(x, i)).
\end{align}
Let $\phi:H\times \mathcal{S}\rightarrow\mathbb{R}$ be a bounded continuous function. Define
\begin{align}\label{P_t 0 MS paper}
(\mathcal{Q}_{t}\phi)(x, i):&=\sum_{k=1}^{m}\int_{H}\phi(z, k)\mathcal{Q}_{t}(x, i; dz, k)
=\mathbb{E}^{x, i}(\phi({\bf u}(t), \mathfrak{r}(t)));
\end{align}
a similar argument as in \eqref{showing Q is stochastic conti} shows that $\mathcal{Q}_{t}$ is also stochastic continuous. 

We say $\lambda$ is a stationary measure if
\begin{align}\label{stationary of u}
\sum_{j=1}^{m}\int_{H}(\mathcal{Q}_{t}\phi)(y, j)\lambda(dy, j)=\sum_{j=1}^{m}\int_{H}\phi(y, j)\lambda(dy, j)
\end{align}
for all  $t\geq 0$ and $\phi(\cdot, j)\in C_{b}(H)$ for all $j\in \mathcal{S}$.

A detailed reader may observe that the indices $\{n_{\alpha}\}_{\alpha\in\mathbb{N}}$ in \eqref{lambda epsilon n to lambda epsilon} may depends on $\epsilon$. In the following lemma, we will construct a sequence 
$\{n_{\alpha^{\prime}}\}_{\alpha^{\prime}\in\mathbb{N}}$ such that the convergence
$\lambda^{\epsilon_{k}}_{n_{\alpha^{\prime}}}\rightarrow\lambda^{\epsilon_{k}}$ , as $\alpha^{\prime}\rightarrow\infty$, is valid for every $k$.
\begin{lemma}\label{n alpha prime}
For the probability measure $\lambda^{\epsilon}_{n}$ defined in \eqref{lambda epsilon n}, there exists a sequence $\{n_{\alpha^{\prime}}\}_{\alpha^{\prime}\in\mathbb{N}}$ such that $\lambda^{\epsilon_{k}}_{n_{\alpha^{\prime}}}\rightarrow\lambda^{\epsilon_{k}}$ as $\alpha^{\prime}\rightarrow\infty$ for every $k$.
\end{lemma}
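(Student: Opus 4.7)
The plan is a standard Cantor diagonal argument, made possible by the fact that the tightness estimate for the family $\{\lambda^{\epsilon}_{n}\}$ is uniform in $\epsilon$, together with the uniqueness part of Theorem \ref{existence of stationary MS paper}.

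First I would revisit the tightness argument used in the proof of Theorem \ref{existence of stationary MS paper}. The a priori bound \eqref{4.5 MS paper} comes from Proposition \ref{enhanced estimates}, whose constants depend only on $K$, $\nu$, $\mathbb{E}|{\bf u}_{0}|^{2}$, and $\int_{0}^{T}\|{\bf f}(s)\|^{2}_{V'}ds$, and in particular are independent of $\epsilon$ (the key ingredient being (3) of Lemma \ref{convolution}). Hence the constant $C$ in \eqref{4.5 MS paper} is independent of $\epsilon$, and consequently the limit in \eqref{4.6 MS paper} is uniform in $\epsilon$. Therefore, for the compact set $A\subset H$ coming from a ball of radius $N$ in $V$, we get
\begin{equation*}
\sup_{k\in\mathbb{N}}\,\sup_{n\in\mathbb{N}}\,\lambda^{\epsilon_{k}}_{n}(A^{c},j)\longrightarrow 0 \quad\text{as }N\to\infty,
\end{equation*}
so the family $\{\lambda^{\epsilon_{k}}_{n}\}_{n,k}$ is tight as a whole.

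Next I would carry out the diagonal extraction. Fix the sequence $\{\epsilon_{k}\}$ of Lemma \ref{n alpha prime}. By Prokhorov's theorem applied to $\{\lambda^{\epsilon_{1}}_{n}\}_{n}$, extract a subsequence $\{n^{(1)}_{\alpha}\}_{\alpha}$ of $\{n\}$ such that $\lambda^{\epsilon_{1}}_{n^{(1)}_{\alpha}}$ converges weakly to some probability measure on $H\times\mathcal{S}$. Tightness of $\{\lambda^{\epsilon_{2}}_{n^{(1)}_{\alpha}}\}_{\alpha}$ follows from the uniform estimate above, so Prokhorov again yields a further subsequence $\{n^{(2)}_{\alpha}\}_{\alpha}\subset\{n^{(1)}_{\alpha}\}_{\alpha}$ along which $\lambda^{\epsilon_{2}}_{n^{(2)}_{\alpha}}$ also converges. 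Iterating, I obtain nested subsequences $\{n^{(k)}_{\alpha}\}_{\alpha}\subset\{n^{(k-1)}_{\alpha}\}_{\alpha}$ such that $\lambda^{\epsilon_{k}}_{n^{(k)}_{\alpha}}$ converges weakly as $\alpha\to\infty$. Setting $n_{\alpha'}:=n^{(\alpha')}_{\alpha'}$, the tail $\{n_{\alpha'}\}_{\alpha'\geq k}$ is a subsequence of $\{n^{(k)}_{\alpha}\}_{\alpha}$, so $\lambda^{\epsilon_{k}}_{n_{\alpha'}}$ converges weakly for every $k$.

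Finally I would identify each limit with $\lambda^{\epsilon_{k}}$. By the same computation as in the proof of Theorem \ref{existence of stationary MS paper} (Chapman--Kolmogorov plus the cancellation of the two boundary time-integrals), any weak limit of $\{\lambda^{\epsilon_{k}}_{n_{\alpha'}}\}$ is a stationary measure for $\mathcal{Q}^{\epsilon_{k}}_{t}$. Since Theorem \ref{existence of stationary MS paper} asserts that the stationary measure is unique, this limit must coincide with $\lambda^{\epsilon_{k}}$, yielding $\lambda^{\epsilon_{k}}_{n_{\alpha'}}\to\lambda^{\epsilon_{k}}$ for every $k$. The main substantive point, and the only thing that could fail in principle, is the $\epsilon$-uniformity of the tightness estimate; this is precisely what \eqref{4.5 MS paper} and part (3) of Lemma \ref{convolution} provide, so no additional work is required.
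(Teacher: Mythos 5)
Your proposal is correct and follows essentially the same route as the paper: a Cantor diagonal extraction over the rows indexed by $\epsilon_{k}$, with each row's limit identified as $\lambda^{\epsilon_{k}}$ via the stationarity computation and the uniqueness statement of Theorem \ref{existence of stationary MS paper}. Your explicit remarks on the $\epsilon$-uniformity of the tightness bound and on the tail of the diagonal being a subsequence of each row are just cleaner formulations of steps the paper carries out (the latter via its ``Cauchy criterion'' triangle-inequality argument), not a different method.
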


\begin{proof}
Consider $\{\lambda^{\epsilon_{1}}_{n}\}_{n\in\mathbb{N}}$. By \eqref{lambda epsilon n to lambda epsilon}, we list the elements of the convergent subsequence 
$\{\lambda^{\epsilon_{1}}_{n_{\alpha_{1}}, \ell}\}_{\ell\in\mathbb{N}}$ and its limit $\lambda^{\epsilon_{1}}$ as follows:
\begin{align*}
\lambda^{\epsilon_{1}}_{n_{\alpha_{1}}, 1}, \lambda^{\epsilon_{1}}_{n_{\alpha_{1}}, 2}, \cdots, \lambda^{\epsilon_{1}}_{n_{\alpha_{1}}, v}, \cdots, \lambda^{\epsilon_{1}}.
\end{align*}
For the sequence $\{\lambda^{\epsilon_{2}}_{n_{\alpha_{1}}, \ell}\}_{\ell\in\mathbb{N}}$, by the same argument, we see that it admits a convergent subsequence $\{\lambda^{\epsilon_{2}}_{n_{\alpha_{2}}, \ell}\}_{\ell\in\mathbb{N}}$ and a limit $\lambda^{\epsilon_{2}}$:
\begin{align*}
\lambda^{\epsilon_{2}}_{n_{\alpha_{2}}, 1}, \lambda^{\epsilon_{2}}_{n_{\alpha_{2}}, 2}, \cdots, \lambda^{\epsilon_{2}}_{n_{\alpha_{2}}, v}, \cdots, \lambda^{\epsilon_{2}}.
\end{align*}
Repeating this procedure, we will have the following list:
\begin{align}\label{diagonalization 1}
\begin{split}
&\lambda^{\epsilon_{1}}_{n_{\alpha_{1}}, 1}, \lambda^{\epsilon_{1}}_{n_{\alpha_{1}}, 2}, \cdots, \lambda^{\epsilon_{1}}_{n_{\alpha_{1}}, v}, \cdots, \lambda^{\epsilon_{1}}\\
&\lambda^{\epsilon_{2}}_{n_{\alpha_{2}}, 1}, \lambda^{\epsilon_{2}}_{n_{\alpha_{2}}, 2}, \cdots, \lambda^{\epsilon_{2}}_{n_{\alpha_{2}}, v}, \cdots, \lambda^{\epsilon_{2}}\\
&\vdots\\
& \lambda^{\epsilon_{v}}_{n_{\alpha_{v}}, 1}, \lambda^{\epsilon_{v}}_{n_{\alpha_{v}}, 2}, \cdots, \lambda^{\epsilon_{v}}_{n_{\alpha_{v}}, v}, \cdots, \lambda^{\epsilon_{v}}\\
&\vdots
\end{split}
\end{align}
For the subindices of the diagonal elements, $\{n_{\alpha_{v}}, v\}_{v\in\mathbb{N}}$, we define, for all $v\in\mathbb{N}$,
\begin{align*}
n_{v}:=(n_{\alpha_{v}}, v)
\end{align*}
Now we would like to show that $\lambda^{\epsilon_{k}}_{n_{v}}\rightarrow\lambda^{\epsilon_{k}}$ weakly, as $v\rightarrow\infty$, for every $k$. For arbitrary $k\in\mathbb{N}$, we have
\begin{align}\label{indices trouble}
&\Big|\sum_{j=1}^{m}\int_{H}\phi(y, j)\lambda^{\epsilon_{k}}_{n_{v}}(dy, j)-\sum_{j=1}^{m}\int_{H}\phi(y, j)\lambda^{\epsilon_{k}}(dy, j)\Big|
\\&\leq\Big|\sum_{j=1}^{m}\int_{H}\phi(y, j)\lambda^{\epsilon_{k}}_{n_{v}}(dy, j)-\sum_{j=1}^{m}\int_{H}\phi(y, j)\lambda^{\epsilon_{k}}_{n_{\alpha_{k}}, v}(dy, j)\Big|\nonumber
\\&\quad+
\Big|\sum_{j=1}^{m}\int_{H}\phi(y, j)\lambda^{\epsilon_{k}}_{n_{\alpha_{k}}, v}(dy, j)-\sum_{j=1}^{m}\int_{H}\phi(y, j)\lambda^{\epsilon_{k}}(dy, j)\Big|\nonumber.
\end{align}
The second term on the right of \eqref{indices trouble} will be arbitrarily small if $v$ is sufficiently large (look at the $k$-th row of \eqref{diagonalization 1}). If $v>k$, $n_{v}:=(n_{\alpha_{v}}, v)$ is an element in a subsequence of the original sequence $\{n_{\alpha_{k}}, \ell\}_{\ell\in\mathbb{N}}$; therefore, viewing $n_{v}$ as a member of  $\{n_{\alpha_{k}}, \ell\}_{\ell\in\mathbb{N}}$, by the Cauchy criterion, we conclude that the first term on the right of \eqref{indices trouble} will be arbitrarily small if $v$ is sufficiently large.

Calling $n_{v}$ as $n_{\alpha^{\prime}}$, we complete the proof.
\end{proof}

Now we are in a position to prove the existence of a stationary measure to the original system.

\begin{theorem}\label{existence of stationary 0 MS paper}
Assume that $\mathbb{E}(|{\bf u}_{0}|^{3})<\infty$ and $\|{\bf f}\|^{2}_{V'}=F>0$. Then there exists a stationary measure $\lambda$, with support in $V\times\mathcal{S}$ and a finite second moment, for a solution ${\bf u}$ of the three-dimensional stochastic Navier-Stokes equation with Markov switching \eqref{equation 0 MS paper}. Moreover, if we denote by $\{\lambda^{\epsilon}\}_{\epsilon>0}$ the family of ergodic measures obtained in Theorem \ref{existence of stationary MS paper}, then there exists a sequence $\epsilon_{\ell}$ decreasing to 0 such that $\lambda^{\epsilon_{\ell}}\rightarrow\lambda$ weakly as $\ell\rightarrow\infty$.
\end{theorem}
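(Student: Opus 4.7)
The plan is to obtain $\lambda$ as a weak subsequential limit of the family of ergodic measures $\{\lambda^{\epsilon}\}_{\epsilon>0}$ produced in Theorem \ref{existence of stationary MS paper}. This naturally splits into three tasks: (a) establish tightness of $\{\lambda^{\epsilon}\}_{\epsilon>0}$ on $H \times \mathcal{S}$, extracting via Prokhorov's theorem a sequence $\epsilon_\ell \searrow 0$ with $\lambda^{\epsilon_\ell} \to \lambda$ weakly; (b) verify that $\lambda$ satisfies the stationarity identity \eqref{stationary of u} for the transition semigroup $\mathcal{Q}_t$ of the limit process $(\mathbf{u}, \mathfrak{r})$; and (c) verify the support and moment properties.

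\textbf{Tightness.} For (a), I re-inspect the tightness argument inside the proof of Theorem \ref{existence of stationary MS paper}. The constant $C$ in \eqref{4.5 MS paper} originates from Proposition \ref{enhanced estimates} together with item (3) of Lemma \ref{convolution}, and neither source depends on $\epsilon$. Hence the Chebyshev-plus-compact-embedding argument leading to \eqref{lambda epsilon n} applies uniformly in $\epsilon$, so the whole family $\{\lambda^{\epsilon}\}_{\epsilon>0}$ is tight on $H \times \mathcal{S}$. Prokhorov's theorem then supplies the desired sequence $\epsilon_\ell \searrow 0$ along which $\lambda^{\epsilon_\ell}$ converges weakly to some probability measure $\lambda$.

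\textbf{Stationarity, the main obstacle.} For (b), fix $t > 0$ and $\phi(\cdot, j) \in C_b(H)$ for each $j \in \mathcal{S}$. Stationarity of $\lambda^{\epsilon_\ell}$ for the regularized semigroup reads
\begin{align*}
\sum_{j=1}^m \int_H (\mathcal{Q}^{\epsilon_\ell}_t \phi)(y, j)\,\lambda^{\epsilon_\ell}(dy, j) = \sum_{j=1}^m \int_H \phi(y, j)\,\lambda^{\epsilon_\ell}(dy, j).
\end{align*}
The right-hand side tends to $\sum_j \int \phi\,d\lambda$ by weak convergence of $\lambda^{\epsilon_\ell}$. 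For the left-hand side, I invoke Lemma \ref{n alpha prime} to secure a \emph{common} time-averaging sequence $\{n_{\alpha'}\}$ along which $\lambda^{\epsilon_\ell}_{n_{\alpha'}} \to \lambda^{\epsilon_\ell}$ for every $\ell$. Applying Chapman--Kolmogorov as in the second half of the proof of Theorem \ref{existence of stationary MS paper}, the left-hand side rewrites as
\begin{align*}
\lim_{\alpha' \to \infty} \frac{1}{t_{n_{\alpha'}}} \int_0^{t_{n_{\alpha'}}} \mathbb{E}^{x, i}\bigl[\phi(\mathbf{u}^{\epsilon_\ell}(s+t), \mathfrak{r}(s+t))\bigr]\,ds.
\end{align*}
The weak convergence $\mathbf{u}^{\epsilon_k} \to \mathbf{u}$ in $(\Omega^{\dagger}, \tau^{\dagger})$ established in \cite{SNSEs markov}, together with the bounded-convergence theorem on the time-average integral, identifies the joint limit ($\ell \to \infty$, then $\alpha' \to \infty$) with the analogous time-average for $\mathbf{u}$, and a second application of the Chapman--Kolmogorov identity (for $\mathcal{Q}_t$) recovers $\sum_j \int (\mathcal{Q}_t \phi)(y, j)\,\lambda(dy, j)$, yielding \eqref{stationary of u}. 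The main obstacle is the justification of the interchange of the two limits in $\ell$ and $\alpha'$; Lemma \ref{n alpha prime} is designed precisely for this, supplying the common subsequence that legitimises a standard diagonal extraction.

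\textbf{Support and moments.} For (c), both $|y|^2 + j^2$ and the $V$-norm squared are lower semicontinuous with respect to the weak topology on $H \times \mathcal{S}$. Fatou's lemma combined with the $\epsilon$-uniform bounds on $\sum_j \int_H \|y\|^2\,\lambda^{\epsilon}(dy, j)$ and on $\sum_j \int_H (|y|^2 + j^2)\,\lambda^{\epsilon}(dy, j)$ (both obtained in step (a), the former via \eqref{4.5 MS paper} and the latter via \eqref{L^2 strong MS paper}) yield a finite second moment for $\lambda$ and simultaneously force $\lambda$ to be concentrated on $V \times \mathcal{S}$, completing the proof.
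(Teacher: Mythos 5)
Your architecture inverts the paper's: you define $\lambda$ as a weak subsequential limit of the ergodic measures $\lambda^{\epsilon_\ell}$ and then try to transfer stationarity to the limit, whereas the paper first constructs $\lambda$ \emph{directly} by the Krylov--Bogolyubov method applied to the limit process ${\bf u}$ itself --- the uniform bound \eqref{4.5 MS paper} passes to ${\bf u}$ by lower semicontinuity of the norm under the weak convergence ${\bf u}^{\epsilon_k}\rightarrow{\bf u}$, giving \eqref{L^2 V bound of u}, tightness of the time-averaged measures $\lambda_{n_{\alpha'}}$, and stationarity via Chapman--Kolmogorov exactly as in Theorem \ref{existence of stationary MS paper} --- and only afterwards relates $\lambda^{\epsilon_\ell}$ to this $\lambda$ through a diagonalization over the doubly indexed family $\lambda^{\epsilon_k}_{n_\beta}$ and an identification of the diagonal limit. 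Your parts (a) and (c) are sound: the bound \eqref{4.5 MS paper} is indeed uniform in $\epsilon$, so $\sum_j\int_H\|y\|^2\,\lambda^{\epsilon}(dy,j)$ is bounded uniformly in $\epsilon$ (this is essentially the paper's \eqref{tight in two variables} pushed to the limit in $n$), and tightness, the support statement, and the finite second moment follow as you say.

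The gap is in (b), and it sits exactly at the point you flag as ``the main obstacle.'' After the Chapman--Kolmogorov rewriting you face the iterated limit $\lim_{\ell}\lim_{\alpha'}\frac{1}{t_{n_{\alpha'}}}\int_0^{t_{n_{\alpha'}}}\mathbb{E}^{x,i}\bigl[\phi({\bf u}^{\epsilon_\ell}(s+t),\mathfrak{r}(s+t))\bigr]ds$ and need to interchange the order. Lemma \ref{n alpha prime} does \emph{not} legitimise this: it supplies a single index sequence $\{n_{\alpha'}\}$ along which $\lambda^{\epsilon_k}_{n_{\alpha'}}\rightarrow\lambda^{\epsilon_k}$ for each \emph{fixed} $k$, with no uniformity in $k$, while an interchange of iterated limits requires uniformity in one variable, or else a diagonal extraction \emph{together with} an independent identification of the diagonal limit. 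A ``standard diagonal extraction'' hands you some limit $\lambda^{\ast}$ of $\lambda^{\epsilon_\ell}_{n_\ell}$, but with nothing to compare it to you cannot conclude that $\lambda^{\ast}$ is $\mathcal{Q}_t$-stationary: each $\lambda^{\epsilon_\ell}$ is stationary only for $\mathcal{Q}^{\epsilon_\ell}_t$, and no convergence $\mathcal{Q}^{\epsilon_\ell}_t\phi\rightarrow\mathcal{Q}_t\phi$ strong enough to integrate against the varying measures is available --- the convergence ${\bf u}^{\epsilon_k}\rightarrow{\bf u}$ from \cite{SNSEs markov} holds along a subsequence for the given initial datum only, and no Feller-type equicontinuity of the semigroups is established for the three-dimensional system. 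This is precisely why the paper needs its Step 1 (an independently constructed stationary measure $\lambda$ for $\mathcal{Q}_t$) and Step 3 (the three-term estimate \eqref{the inequality} identifying the diagonal limit $\lambda^{\ast}$ with that $\lambda$, using the convergence in law of ${\bf u}^{\epsilon_\ell}$ to ${\bf u}$ only through time averages started from the fixed initial condition $(x,i)$). Without these two ingredients your argument does not close.
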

The proof of this theorem consists of four parts:
\begin{enumerate}
\item The existence of a stationary measure $\lambda$ to the original system \eqref{equation 0 MS paper}.
\item Implement a diagonalization argument for $\lambda^{\epsilon}_{n}$ defined in \eqref{lambda n MS paper}. Then we obtain a convergent subsequence $\{\lambda^{\epsilon_{\ell}}_{n_{\ell}}\}_{\ell\in\mathbb{N}}$ and a limit $\lambda^{\ast}$.
\item Identification of the limit: $\lambda=\lambda^{\ast}$.
\item Relate $\lambda^{\epsilon}$ to $\lambda$, i.e., show that $\lambda^{\epsilon_{\ell}}\rightarrow\lambda$ weakly as $\ell\rightarrow\infty$.
\end{enumerate}
We shall begin by showing the existence of a stationary measure.

\begin{proof}[Step 1: The existence of a stationary measure]
It follows from \cite{SNSEs markov} that ${\bf u}^{\epsilon}$ converges along a sequence to ${\bf u}$, as $\epsilon\rightarrow 0$, in the path space $(\Omega^{\dagger}, \tau^{\dagger})$.  Hence, there exist a sequence $\{\epsilon_{k}\}_{k\in\mathbb{N}}$ such that ${\bf u}^{\epsilon_{k}}\rightarrow{\bf u}$, as $k\rightarrow \infty$, in $(\Omega^{\dagger}, \tau^{\dagger})$. Therefore, in particular, ${\bf u}^{\epsilon_{k}}\rightarrow{\bf u}$ weakly in $L^{2}(\Omega; L^{2}(0, T; V))$ as $k\rightarrow\infty$. It follows from the lower semi-continuity of the norm (with respect to weak topology) and \eqref{4.5 MS paper} that
\begin{align}
\frac{\nu}{t}\mathbb{E}\Big(\int^{t}_{0}\|{\bf u}(s)\|^{2}ds\Big)\leq\liminf_{k\rightarrow \infty}\frac{\nu}{t}\mathbb{E}\Big(\int^{t}_{0}\|{\bf u}^{\epsilon_{k}}(s)\|^{2}ds\Big)\leq C \label{L^2 V bound of u}
\end{align}
for $t>1$ and $C$ is an appropriate constant independent of $t$ and $\epsilon$. Hence by the Chebyshev inequality
\begin{align}
\lim_{N\rightarrow\infty}\sup_{t>1}\frac{1}{t}\int^{t}_{0}P(\|{\bf u}(s)\|>N)ds=0\label{4.6 0 MS paper}
\end{align}
follows.

Let $\{t_{n_{\alpha^{\prime}}}\}$ be the increasing sequence obtained in Lemma \ref{n alpha prime}. Define probability measures $\lambda_{n_{\alpha^{\prime}}}$ as follows (with initial condition $({\bf u}(0), \mathfrak{r}(0))=(x, i)$):
\begin{align}
\lambda_{n_{\alpha^{\prime}}}(B, j):= \frac{1}{t_{n_{\alpha^{\prime}}}}\int^{t_{n_{\alpha^{\prime}}}}_{0}\mathcal{Q}_{s}(x, i; B, j)ds\label{lambda n 0 MS paper}
\end{align}
for all $B\in\mathcal{B}(H)$ and $j\in\mathcal{S}$. Then it follows from \eqref{4.6 0 MS paper} that $\{\lambda_{n_{\alpha^{\prime}}}\}_{\alpha^{\prime}\in\mathbb{N}}$ is tight in the space of probability measures on $(H\times\mathcal{S}, \mathcal{B}(H\times\mathcal{S}))$ equipped with weak topology. By the Prokhorov's theorem, there exists a subsequence $\{\lambda_{n_{\alpha^{\prime}_{\beta}}}\}_{\beta\in\mathbb{N}}$ such that 
\begin{align}\label{lambda n to lambda 0}
\lambda_{n_{\alpha^{\prime}_{\beta}}}\quad\text{converges in law to a limit}\quad\lambda\quad\mbox{as}\quad\beta\rightarrow\infty.
\end{align}
From now on, we shall denote $n_{_{\beta}}=n_{\alpha^{\prime}_{\beta}}$ for simplicity.

For $\phi(\cdot, j)\in C_{b}(H)$ for all $j\in \mathcal{S}$, we have
\begin{align*}
&\sum_{j=1}^{m}\int_{H}(\mathcal{Q}_{t}\phi)(y, j)\lambda(dy, j)
=\lim_{{\beta}\rightarrow\infty}\sum_{j=1}^{m}\int_{H}(\mathcal{Q}_{t}\phi)(y, j)\lambda_{n_{\beta}}(dy, j)
\\&=\lim_{\beta\rightarrow\infty}\sum_{j=1}^{m}\int_{H}(\mathcal{Q}_{t}\phi)(y, j)\frac{1}{t_{n_{_\beta}}}\int^{t_{n_{_\beta}}}_{0}\mathcal{Q}_{s}(x, i; dy, j)ds.
\end{align*}
Using \eqref{P_t 0 MS paper} in above, we further have
\begin{align}
&\sum_{j=1}^{m}\int_{H}(\mathcal{Q}_{t}\phi)(y, j)\lambda(dy, j)\label{stationary lambda 0 MS paper}
\\&=\lim_{\beta\rightarrow\infty}\sum_{j=1}^{m}\sum_{k=1}^{m}\int_{H}\int_{H}\phi(z, k)\mathcal{Q}_{t}(y, j; dz, k)\frac{1}{t_{n_{_\beta}}}\int^{t_{n_{_\beta}}}_{0}\mathcal{Q}_{s}(x, i; dy, j)ds.\nonumber
\end{align}

Recalling the Chapman-Kolmogorov equation, i.e., 
\begin{align*}
\mathcal{Q}_{s+t}(x, i; dz, k)=\sum_{j=1}^{m}\int_{H}\mathcal{Q}_{s}(x, i; dy, j)\mathcal{Q}_{t}(y, j; dz, k)
\end{align*}
and plugging it into \eqref{stationary lambda 0 MS paper}, we have
\begin{align*}
\sum_{j=1}^{m}\int_{H}(\mathcal{Q}_{t}\phi)(y, j)\lambda(dy, j)=\lim_{\beta\rightarrow\infty}\sum_{k=1}^{m}\int_{H}\phi(z, k)\frac{1}{t_{n_{_\beta}}}\int^{t_{n_{_\beta}}}_{0}\mathcal{Q}_{s+t}(x, i; dz, k)ds.
\end{align*}

Note that
\begin{align*}
&\lim_{\beta\rightarrow\infty}\frac{1}{t_{n_{_\beta}}}\int^{t_{n_{_\beta}}}_{0}\mathcal{Q}_{s+t}(x, i; dz, k)ds
=\lim_{\beta\rightarrow\infty}\frac{1}{t_{n_{_\beta}}}\int^{t_{n_{_\beta}}+t}_{t}\mathcal{Q}_{u}(x, i; dz, k)du
\\&=\lim_{\beta\rightarrow\infty}\frac{1}{t_{n_{_\beta}}}\Big(\int^{t_{n_{_\beta}}}_{0}\mathcal{Q}_{u}(x, i; dz, k)du+\int^{t_{n_{_\beta}}+t}_{t_{n_{_\beta}}}\mathcal{Q}_{u}(x, i; dz, k)du
\\&\qquad\qquad\quad-\int^{t}_{0}\mathcal{Q}_{u}(x, i; dz, k)du\Big)
\\&=\lim_{\beta\rightarrow\infty}\frac{1}{t_{n_{_\beta}}}\int^{t_{n_{_\beta}}}_{0}\mathcal{Q}_{u}(x, i; dz, k)du
\end{align*}

Thus, we have
\begin{align*}
&\sum_{j=1}^{m}\int_{H}(\mathcal{Q}_{t}\phi)(y, j)\lambda(dy, j)
=\lim_{\beta\rightarrow\infty}\sum_{k=1}^{m}\int_{H}\phi(z, k)\frac{1}{t_{n_{_\beta}}}\int^{t_{n_{_\beta}}}_{0}\mathcal{Q}_{s+t}(x, i; dz, k)ds
\\&=\lim_{\beta\rightarrow\infty}\sum_{k=1}^{m}\int_{H}\phi(z, k)\frac{1}{t_{n_{_\beta}}}\int^{t_{n_{_\beta}}}_{0}\mathcal{Q}_{u}(x, i; dz, k)du
=\sum_{k=1}^{m}\int_{H}\phi(z, k)\lambda(dz, k).
\end{align*}
Hence, we conclude that $\lambda$ is a stationary measure.

For the second moment of $\lambda$, one employs the lower semi-continuity of the $H$-norm to deduce
\begin{align*}
\sum_{j=1}^{m}\int_{H}(|y|^{2}+j^{2})\lambda(dy, j)\leq\liminf_{\beta\rightarrow\infty}\sum_{j=1}^{m}\int_{H}(|y|^{2}+j^{2})\lambda_{n_{_\beta}}(dy, j).
\end{align*}
Then using \eqref{P_t 0 MS paper} and \eqref{lambda n 0 MS paper} with $\phi(y, j)=|y|^{2}+j^{2}$, one further deduces
\begin{align*}
&\sum_{j=1}^{m}\int_{H}(|y|^{2}+j^{2})\lambda_{n_{_\beta}}(dx, i)
=\sum_{j=1}^{m}\int_{H}\phi(y, j)\frac{1}{n_{_\beta}}\int^{n_{_\beta}}_{0}\mathcal{Q}_{t}(x, i; dy, j)dt
\\&=\frac{1}{n_{_\beta}}\int^{n_{_\beta}}_{0}\sum_{j=1}^{m}\int_{H}\phi(y, j)\mathcal{Q}_{t}(x, i; dy, j)dt
=\frac{1}{n_{_\beta}}\int^{n_{_\beta}}_{0}\mathbb{E}(|{\bf u}^{y}(t)|^{2}+i^{2})dt.
\end{align*}
For the term
\begin{align*}
\frac{1}{n_{_\beta}}\int^{n_{_\beta}}_{0}\mathbb{E}|{\bf u}^{y}(t)|^{2}dt,
\end{align*}
we deduce from $|\cdot|<\|\cdot\|$ and \eqref{L^2 V bound of u} that
\begin{align*}
\frac{1}{n_{_\beta}}\int^{n_{_\beta}}_{0}\mathbb{E}|{\bf u}^{y}(t)|^{2}dt<\frac{1}{t_{n_{_\beta}}}\int^{t_{n_{_\beta}}}_{0}\mathbb{E}\|{\bf u}^{y}(t)\|^{2}dt<C,
\end{align*} 
where $C$ is a generic finite constant independent of $t$ and $\epsilon$. Therefore, the second moment for the measure $\lambda$ exists.

For the support of $\lambda$, let ${\bf u}$ denote the solution of \eqref{equation 0 MS paper} started at ${\bf u}_{0}$ and ${\mathfrak{r}}_{0}$, where the (joint) distribution of $({\bf u}_{0}, \mathfrak{r}_{0})$ is given by $\lambda$. By \eqref{L^2 V bound of u}, it follows that ${\bf u}(s)$ is almost surely $V$-valued for almost all $s$. In particular, $\lambda$ has support in $V\times\mathcal{S}$.
\end{proof}

\begin{proof}[Step 2: A diagonalization argument]

Recall the definition of $\lambda^{\epsilon}_{n}$ from \eqref{lambda n MS paper}. Let $N$ be a positive integer and  $A:=\{v\in V : \|v\|\leq N\}$, which is a bounded set in $V$. Then by the compact embedding $\iota:V\hookrightarrow H$, $A$ is a relatively compact set in $H$. Consider
\begin{align*}
\lambda^{\epsilon}_{n}(A^{c}, j)&=\frac{1}{t_{n}}\int^{t_{n}}_{0}\mathcal{Q}^{\epsilon}_{s}(x, i; A^{c}, j)ds
\\&=\frac{1}{t_{n}}\int^{t_{n}}_{0}P(({\bf u}^{\epsilon}(t), \mathfrak{r}(t))\in (A^{c}, j) | ({\bf u}^{\epsilon}(0), \mathfrak{r}(0))=(x, i))
\\&=\frac{1}{t_{n}}\int^{t_{n}}_{0}P(\|{\bf u}^{\epsilon}(t)\|>N \ \mbox{and} \ \mathfrak{r}(t)=j  | {\bf u}^{\epsilon}(0)=x \  \mbox{and}\ \mathfrak{r}(0)=i).
\end{align*}
By the Chebyshev inequality, we have
\begin{align*}
P(\|{\bf u}^{\epsilon}(t)\|>N \ \mbox{and} \ \mathfrak{r}(t)=j  | {\bf u}^{\epsilon}(0)=x \  \mbox{and}\ \mathfrak{r}(0)=i)
\leq\frac{1}{N^{2}}\mathbb{E}\|{\bf u}^{\epsilon}(t)\|^{2};
\end{align*}
therefore,
\begin{align*}
\lambda^{\epsilon}_{n}(A^{c}, j)=\frac{1}{t_{n}}\int^{t_{n}}_{0}\mathcal{Q}^{\epsilon}_{s}(x, i; A^{c}, j)ds
\leq\frac{1}{N^{2}}\frac{1}{t_{n}}\mathbb{E}\int^{t_{n}}_{0}\|{\bf u}^{\epsilon}(t)\|^{2}ds;
\end{align*}
moreover, by \eqref{L^2 strong MS paper}, we have
\begin{align*}
\lambda^{\epsilon}_{n}(A^{c}, j)&=\frac{1}{t_{n}}\int^{t_{n}}_{0}\mathcal{Q}^{\epsilon}_{s}(x, i; A^{c}, j)ds\leq\frac{1}{N^{2}}\frac{1}{t_{n}}\mathbb{E}\int^{t_{n}}_{0}\|{\bf u}^{\epsilon}(t)\|^{2}ds
\\&\leq\frac{1}{N^{2}}\frac{1}{t_{n}}\Big(\mathbb{E}|{\bf u}_{0}|^{2}+\frac{1}{\nu}t_{n}\|{\bf f}\|^{2}_{V'}+2Kt_{n}\Big)\leq\frac{1}{N^{2}}\sup_{ t_{n}>1}\Big\{\Big(\frac{\mathbb{E}|{\bf u}_{0}|^{2}}{t_{n}}+\frac{F}{\nu}+2K\Big)\Big\}.
\end{align*}
Hence, for a sufficiently large relatively compact set $A$ (i.e., sufficiently large $N$), we have
\begin{align}\label{tight in two variables}
\lambda^{\epsilon}_{n}(A^{c}, j)<\delta
\end{align}
where $\delta$ is independent of $t, \epsilon$, and $n$.

Consider $\lambda^{\epsilon_{k}}_{n_{\beta}}$. For $\{\lambda^{\epsilon_{k}}_{n_{1}}\}_{k\in\mathbb{N}}$, we see from \eqref{tight in two variables} that it is tight; therefore, by Prokhorov's theorem, there exists a  convergent subsequence $\{\lambda^{\epsilon_{k_{1}}}_{n_{1}}\}_{k_{1}\in\mathbb{N}}$, which we list below as follows:
\begin{align*}
\lambda^{\epsilon_{k_{1}, 1}}_{n_{1}}, \lambda^{\epsilon_{k_{1}, 2}}_{n_{1}}, \cdots\lambda^{\epsilon_{k_{1}, v}}_{n_{1}}, \cdots.
\end{align*}
For the sequence $\{\lambda^{\epsilon_{k_{1}}}_{n_{2}}\}_{k_{1}\in\mathbb{N}}$, by the same argument, we see that it admits a convergent subsequence $\{\lambda^{\epsilon_{k_{2}}}_{n_{2}}\}_{k_{2}\in\mathbb{N}}$:
\begin{align*}
\lambda^{\epsilon_{k_{2}, 1}}_{n_{2}}, \lambda^{\epsilon_{k_{2}, 2}}_{n_{2}}, \cdots,\lambda^{\epsilon_{k_{2}, v}}_{n_{2}}, \cdots.
\end{align*}
Repeating this procedure, we will have the following list:
\begin{align*}
&\lambda^{\epsilon_{k_{1}, 1}}_{n_{1}}, \lambda^{\epsilon_{k_{1}, 2}}_{n_{1}}, \cdots\lambda^{\epsilon_{k_{1}, v}}_{n_{1}}, \cdots\\
&\lambda^{\epsilon_{k_{2}, 1}}_{n_{2}}, \lambda^{\epsilon_{k_{2}, 2}}_{n_{2}}, \cdots,\lambda^{\epsilon_{k_{2}, v}}_{n_{2}}, \cdots\\
&\vdots\\
&\lambda^{\epsilon_{k_{v}, 1}}_{n_{v}}, \lambda^{\epsilon_{k_{v}, 2}}_{n_{v}}, \cdots,\lambda^{\epsilon_{k_{v}, v}}_{n_{v}},\cdots\\
&\vdots
\end{align*}
Collecting the diagonal elements $\{\lambda^{\epsilon_{k_{v}}, v}_{n_{v}}\}_{v\in\mathbb{N}}$, we see again from \eqref{tight in two variables} that it is tight; therefore, there exists a subsequence $\{\lambda^{\epsilon_{\ell}}_{n_{\ell}}\}_{\ell\in\mathbb{N}}$ such that
\begin{align}\label{lambda star}
\lambda^{\epsilon_{\ell}}_{n_{\ell}}\quad\text{converges in law to a limit}\quad\lambda^{\ast}\quad\mbox{as}\quad\ell\rightarrow\infty.
\end{align}
\end{proof}

\begin{proof}[Step 3: Identification of the limit]
Now we claim that $\lambda^{\ast}=\lambda$. Consider, for all $\phi(\cdot, j)\in C_{b}(H)$ for all $j\in\mathcal{S}$,
\begin{align}
&\Big|\sum_{j=1}^{m}\int_{H}\phi(y, j)\lambda(dy, j)-\sum_{j=1}^{m}\int_{H}\phi(y, j)\lambda^{\ast}(dy, j)\Big|\label{the inequality}
\\&\leq\Big|\sum_{j=1}^{m}\int_{H}\phi(y, j)\lambda(dy, j)-\sum_{j=1}^{m}\int_{H}\phi(y, j)\lambda_{n_{\ell}}(dy, j)\Big|\nonumber
\\&\quad+\Big|\sum_{j=1}^{m}\int_{H}\phi(y, j)\lambda_{n_{\ell}}(dy, j)-\sum_{j=1}^{m}\int_{H}\phi(y, j)\lambda^{\epsilon_{\ell}}_{n_{\ell}}(dy, j)\Big|\nonumber
\\&\quad+\Big|\sum_{j=1}^{m}\int_{H}\phi(y, j)\lambda^{\epsilon_{\ell}}_{n_{\ell}}(dy, j)-\sum_{j=1}^{m}\int_{H}\phi(y, j)\lambda^{\ast}(dy, j)\Big|.\nonumber
\end{align}
For any $\eta>0$, there exist $N_{1}\in\mathbb{N}$ such that 
\begin{align}\label{1 of 1/3}
\Big|\sum_{j=1}^{m}\int_{H}\phi(y, j)\lambda(dy, j)-\sum_{j=1}^{m}\int_{H}\phi(y, j)\lambda_{n_{\ell}}(dy, j)\Big|<\frac{\eta}{3}
\end{align}
if $\ell>N_{1}$ by \eqref{lambda n to lambda 0} and the fact that $\{n_{\ell}\}_{\ell\in\mathbb{N}}$ is a subsequence of $\{n_{_{\beta}}\}_{\beta\in\mathbb{N}}$; there exists $N_{3}\in\mathbb{N}$ such that
\begin{align}\label{3 of 1/3}
\Big|\sum_{j=1}^{m}\int_{H}\phi(y, j)\lambda^{\epsilon_{\ell}}_{n_{\ell}}(dy, j)-\sum_{j=1}^{m}\int_{H}\phi(y, j)\lambda^{\ast}(dy, j)\Big|<\frac{\eta}{3}
\end{align}
if $\ell>N_{3}$ by \eqref{lambda star}. 

For the second term in \eqref{the inequality}, we use \eqref{P_t MS paper} and \eqref{P_t 0 MS paper} to simplify:
\begin{align*}
&\Big|\sum_{j=1}^{m}\int_{H}\phi(y, j)\lambda_{n_{\ell}}(dy, j)-\sum_{j=1}^{m}\int_{H}\phi(y, j)\lambda^{\epsilon_{\ell}}_{n_{\ell}}(dy, j)\Big|
\\&=\Big|\frac{1}{t_{n_{\ell}}}\int^{t_{n_{\ell}}}_{0}\mathbb{E}^{x, i}\phi({\bf u}(s), \mathfrak{r}(s))ds-\frac{1}{t_{n_{\ell}}}\int^{t_{n_{\ell}}}_{0}\mathbb{E}^{x, i}\phi({\bf u}^{\epsilon_{\ell}}(s), \mathfrak{r}(s))ds\Big|\nonumber.
\end{align*}
Since ${\bf u}^{\epsilon_{\ell}}\rightarrow{\bf u}$ in law, there exists $N_{2}\in\mathbb{N}$ such that
\begin{align}\label{2 of 1/3}
&\Big|\frac{1}{t_{n_{\ell}}}\int^{t_{n_{\ell}}}_{0}\mathbb{E}^{x, i}\phi({\bf u}(s), \mathfrak{r}(s))ds-\frac{1}{t_{n_{\ell}}}\int^{t_{n_{\ell}}}_{0}\mathbb{E}^{x, i}\phi({\bf u}^{\epsilon_{\ell}}(s), \mathfrak{r}(s))ds\Big|
\\&\leq\frac{1}{t_{n_{\ell}}}\int^{t_{n_{\ell}}}_{0}|\mathbb{E}^{x, i}\phi({\bf u}(s), \mathfrak{r}(s))-\mathbb{E}^{x, i}\phi({\bf u}^{\epsilon_{\ell}}(s), \mathfrak{r}(s))|ds
\leq\frac{1}{t_{n_{\ell}}}\int^{t_{n_{\ell}}}_{0}\frac{\eta}{3}ds=\frac{\eta}{3}\nonumber
\end{align}
if $\ell> N_{2}$. Taking $N=\max_{1\leq j\leq 3}{N_{j}}$, we see from \eqref{1 of 1/3}, \eqref{3 of 1/3}, and \eqref{2 of 1/3} that
\begin{align*}
&\Big|\sum_{j=1}^{m}\int_{H}\phi(y, j)\lambda(dy, j)-\sum_{j=1}^{m}\int_{H}\phi(y, j)\lambda^{\ast}(dy, j)\Big|<\eta
\end{align*}
when $\ell>N$. Thus, we conclude that $\lambda=\lambda^{\ast}$.
\end{proof}

\begin{proof}[Step 4: Relate $\lambda^{\epsilon}$ to $\lambda$]

Since $\lambda=\lambda^{\ast}$, \eqref{lambda star} implies that
\begin{align}\label{1 of lambda epsilon to lambda}
\lambda^{\epsilon_{\ell}}_{n_{\ell}}\rightarrow\lambda
\end{align}
weakly as $\ell\rightarrow\infty$; moreover, by Lemma \ref{n alpha prime}, we see that, for any fixed $\epsilon_{k}$,
\begin{align}\label{2 of lambda epsilon to lambda}
\lim_{\ell\rightarrow\infty}\sum_{j=1}^{m}\int_{H}\phi(y, j)\lambda^{\epsilon_{k}}_{n_{\ell}}(dy, j)=\sum_{j=1}^{m}\int_{H}\phi(y, j)\lambda^{\epsilon_{k}}(dy, j).
\end{align}
Thus, by \eqref{1 of lambda epsilon to lambda} and \eqref{2 of lambda epsilon to lambda}, we have
\begin{align*}
\lim_{\ell\rightarrow\infty}\sum_{j=1}^{m}\int_{H}\phi(y, j)\lambda^{\epsilon_{\ell}}(dy, j)=\sum_{j=1}^{m}\int_{H}\phi(y, j)\lambda(dy, j)
\end{align*}
which completes the proof.
\end{proof}

\section*{Acknowledgements}
This work is a part of the PhD thesis of the first author. He thanks Professor Sundar for guidance on this project, and Professor Xiaoliang Wan for financial support from NSF grant DMS-1622026.

\end{document}